\RequirePackage[l2tabu,orthodox]{nag}
\documentclass
[11pt,letterpaper]
{article}

\newcommand\MYcurrentlabel{xxx}
\newcommand{\MYstore}[2]{%
  \global\expandafter \def \csname MYMEMORY #1 \endcsname{#2}%
}
\newcommand{\MYload}[1]{%
  \csname MYMEMORY #1 \endcsname%
}
\newcommand{\MYnewlabel}[1]{%
  \renewcommand\MYcurrentlabel{#1}%
  \MYoldlabel{#1}%
}
\newcommand{\MYdummylabel}[1]{}
\newcommand{\torestate}[1]{%
  \let\MYoldlabel\label%
  \let\label\MYnewlabel%
  #1%
  \MYstore{\MYcurrentlabel}{#1}%
  \let\label\MYoldlabel%
}
\newcommand{\restatetheorem}[1]{%
  \let\MYoldlabel\label
  \let\label\MYdummylabel
  \begin{theorem*}[Restatement of \cref{#1}]
    \MYload{#1}
  \end{theorem*}
  \let\label\MYoldlabel
}
\newcommand{\restatecorollary}[1]{%
  \let\MYoldlabel\label
  \let\label\MYdummylabel
  \begin{corollary*}[Restatement of \cref{#1}]
    \MYload{#1}
  \end{corollary*}
  \let\label\MYoldlabel
}
\newcommand{\restatelemma}[1]{%
  \let\MYoldlabel\label
  \let\label\MYdummylabel
  \begin{lemma*}[Restatement of \cref{#1}]
    \MYload{#1}
  \end{lemma*}
  \let\label\MYoldlabel
}
\newcommand{\restateprop}[1]{%
  \let\MYoldlabel\label
  \let\label\MYdummylabel
  \begin{proposition*}[Restatement of \cref{#1}]
    \MYload{#1}
  \end{proposition*}
  \let\label\MYoldlabel
}
\newcommand{\restatefact}[1]{%
  \let\MYoldlabel\label
  \let\label\MYdummylabel
  \begin{fact*}[Restatement of \cref{#1}]
    \MYload{#1}
  \end{fact*}
  \let\label\MYoldlabel
}
\newcommand{\restatedefinition}[1]{%
  \let\MYoldlabel\label
  \let\label\MYdummylabel
  \begin{definition*}[Restatement of \cref{#1}]
    \MYload{#1}
  \end{definition*}
  \let\label\MYoldlabel
}
\newcommand{\restate}[1]{%
  \let\MYoldlabel\label
  \let\label\MYdummylabel
  \MYload{#1}
  \let\label\MYoldlabel
}

\newcommand{\Hoelder}{H\"{o}lder\xspace}
\newcommand{\Holder}{\Hoelder}

\usepackage[notes=true,later=false,camera=false]{dtrt}
\usepackage[utf8]{inputenc}
\usepackage{etex}
\usepackage{ stmaryrd }
\usepackage{xspace,enumerate}
\usepackage[T1]{fontenc}
\usepackage[full]{textcomp}
\usepackage[american]{babel}
\usepackage{mathtools}

\usepackage{amsthm}
\usepackage{empheq}

   \usepackage{hyperref}
\usepackage[capitalise,nameinlink]{cleveref}
\crefname{lemma}{Lemma}{Lemmas}
\crefname{fact}{Fact}{Facts}
\newcommand{\colorconstraints}{\text{Color Constraints}}
\crefname{colorconstraints}{(color constraints)}{Color Constraints}
\crefformat{colorconstraints}{#2\colorconstraints#3}
\crefname{indsetconstraints}{(indset constraints)}{IndSet Constraints}
\crefformat{indsetconstraints}{#2$\mathsf{IndSet\ Axioms}$#3}
\crefname{theorem}{Theorem}{Theorems}
\crefname{mtheorem}{Theorem}{Theorems}
\crefname{corollary}{Corollary}{Corollaries}
\crefname{claim}{Claim}{Claims}
\crefname{example}{Example}{Examples}
\crefname{problem}{Problem}{Problems}
\crefname{definition}{Definition}{Definitions}
\usepackage{paralist}
\usepackage{turnstile}
\usepackage{mdframed}
\usepackage{tikz}
\usepackage{caption}
\usepackage{newfloat}
\newtheorem{theorem}{Theorem}[section]
\newtheorem*{theorem*}{Theorem}

\newtheorem{proposition}[theorem]{Proposition}
\newtheorem*{proposition*}{Proposition}
\newtheorem{lemma}[theorem]{Lemma}
\newtheorem*{lemma*}{Lemma}
\newtheorem{corollary}[theorem]{Corollary}
\newtheorem*{corollary*}{Corollary}
\newtheorem*{conjecture*}{Conjecture}
\newtheorem{fact}[theorem]{Fact}
\newtheorem*{fact*}{Fact}

\newtheorem*{hypothesis*}{Hypothesis}

\theoremstyle{definition}
\newtheorem{definition}[theorem]{Definition}
\newtheorem*{definition*}{Definition}

\newtheorem{model}[theorem]{Model}

\theoremstyle{remark}

\newtheorem*{claim*}{Claim}

\newtheorem*{remark*}{Remark}

\newtheorem*{observation*}{Observation}

\newtheorem*{modell*}{Model}

\usepackage[
letterpaper,
top=1.2in,
bottom=1.2in,
left=1in,
right=1in]{geometry}
\usepackage{newpxtext} %
\usepackage{textcomp} %
\usepackage[varg,bigdelims]{newpxmath}
\usepackage[scr=rsfso]{mathalfa}%
\usepackage{bm} %
\linespread{1.1}%
\let\mathbb\varmathbb
\usepackage{microtype}

\allowdisplaybreaks

\newcommand{\Set}[1]{\left\{#1\right\}}

\newcommand{\R}{{\mathbb R}}

\newcommand{\norm}[1]{\lVert #1 \rVert}

\newcommand{\Abs}[1]{\left\lvert #1 \right\rvert}

\let\epsilon=\varepsilon

\newcommand{\Paren}[1]{\left(#1\right)}

\newcommand{\Norm}[1]{\left\lVert#1\right\rVert}

\newcommand{\supp}{\operatorname{supp}}
\newcommand{\support}{\operatorname{supp}}

\newcommand{\Iprod}[1]{\left\langle#1\right\rangle}

\newcommand{\vectorize}{\operatorname{vec}}

\crefname{algocf}{Algorithm}{Algorithms}
\Crefname{algocf}{Algorithm}{Algorithms}
\usepackage{mdframed}
\usepackage[linesnumbered, ruled, vlined]{algorithm2e}

\begin{document}

\newcommand{\FormatAuthor}[3]{
\begin{tabular}{c}
#1 \\ {\small\texttt{#2}} \\ {\small #3}
\end{tabular}
}
\title{Lasso and Partially-Rotated Designs}

\author{}
\author{
\begin{tabular}[h!]{ccc}
  \FormatAuthor{Rares-Darius Buhai}{rares.buhai@inf.ethz.ch}{ETH Zürich}
\end{tabular}
} %
\date{\today}

\maketitle
\thispagestyle{empty}

\begin{abstract}
  We consider the sparse linear regression model $\bm{y} = X \beta +\bm{w}$, where $X \in \R^{n \times d}$ is the design, $\beta \in \R^{d}$ is a $k$-sparse secret, and $\bm{w} \sim N(0, I_n)$ is the noise.
Given input $X$ and $\bm{y}$, the goal is to estimate~$\beta$.
In this setting, the Lasso estimate achieves prediction error $O(k \log d / \gamma n)$, where $\gamma$ is the restricted eigenvalue (RE) constant of $X$ with respect to $\support(\beta)$.
In this paper, we introduce a new \emph{semirandom} family of designs --- which we call \emph{partially-rotated} designs --- for which the RE constant with respect to the secret is bounded away from zero even when a subset of the design columns are arbitrarily correlated among themselves.

As an example of such a design, suppose we start with some arbitrary $X$, and then apply a random rotation to the columns of $X$ indexed by $\support(\beta)$.
Let $\lambda_{\min}$ be the smallest eigenvalue of $\frac{1}{n} X_{\support(\beta)}^\top X_{\support(\beta)}$, where $X_{\support(\beta)}$ is the restriction of $X$ to the columns indexed by $\support(\beta)$.
In this setting, our results imply that Lasso achieves prediction error $O(k \log d / \lambda_{\min} n)$ with high probability.
This prediction error bound is independent of the arbitrary columns of $X$ not indexed by $\support(\beta)$, and is as good as if all of these columns were perfectly well-conditioned.

Technically, our proof reduces to showing that matrices with a certain deterministic property --- which we call \emph{restricted normalized orthogonality} (RNO) --- lead to RE constants that are independent of a subset of the matrix columns.
This property is similar but incomparable with the restricted orthogonality condition of~\cite{MR2243152-Candes05}.

\end{abstract}
\pagestyle{plain}
\setcounter{page}{1}

\section{Introduction}%
\label{sec:introduction}

Sparse linear models are among the simplest models that admit strong theoretical guarantees even when the number of samples is much smaller than the dimension.
Hence, these models are of fundamental interest in high-dimensional statistics.
In this paper we consider sparse linear regression,
where the input consists of a design $X \in \R^{n \times d}$ and a response vector $\bm{y} \in \R^{n}$, related linearly as
\begin{equation}
\label{eq:slr}
\bm{y} = X \beta + \bm{w}\,.
\end{equation}
Here $\beta \in \R^d$ is a $k$-sparse unknown \emph{secret} and $\bm{w} \sim N(0, \sigma^2 \cdot I_n)$ is noise.
Alternatively, one can view the input as consisting of $n$ samples $(X^i, \bm{y}_i)$ that satisfy the linear relation $\bm{y}_i = \langle X^i, \beta\rangle + \bm{w}_i$, where $X^i$ is the $i$-th row of the design.
We assume the standard normalization in which the columns of $X$ have norm $\sqrt{n}$.
Then, we focus on the goal of outputting an estimate $\hat{\beta} \in \R^d$ of $\beta$ such that the \emph{prediction error} $\frac{1}{n} \Norm{X\hat{\beta} - X\beta}_2^2$ is small.

In this setting, $\ell_0$-constrained optimization yields with high probability an estimate with prediction error $O\Paren{\sigma^2 k \log (d/k) / n}$~\cite{MR2882274-Raskutti11}.
This method is also known as the Best Subset Selection (BSS) algorithm.
Unfortunately, BSS runs in time exponential in $k$, and no polynomial-time algorithms are known that achieve \emph{any} non-trivial prediction error without additional assumptions.\footnote{A guarantee known as the \emph{slow rate} of lasso (see Theorem 7.20a in~\cite{MR3967104-Wainwright19}) is sometimes talked about as assumptionless, but we note that this guarantee scales linearly with the $\ell_1$-norm of $\beta$, so it is a non-trivial guarantee only with the additional assumption that this norm is small.}

\paragraph{Lasso}
Given this state of affairs, much research has been devoted to studying assumptions under which small prediction error can be achieved in polynomial time.
One of the most studied polynomial-time estimates for this problem is the Lasso~\cite{tibshirani1996regression}, defined (in its constrained form) as%
\begin{equation}
\label{eq:lasso}
\hat{\beta}_{\mathrm{Lasso}} = \min_{\substack{\hat{\beta} \in \R^d\\ \norm{\hat\beta}_1 \leq \norm{\beta}_1}} \Norm{\bm{y} - X \hat\beta}_2^2\,.  
\end{equation}
Lasso achieves with high probability prediction error 
\begin{equation}
\label{eq:one}
\frac{1}{n} \Norm{X\hat{\beta}_{\mathrm{Lasso}} - X\beta}_2^2 \leq O\Paren{\sigma^2 \frac{k \log d}{\gamma_{\supp(\beta)}(X) \cdot n}}\,,
\end{equation}
where $\gamma_{\supp(\beta)}(X)$ is the \emph{restricted eigenvalue} (RE) constant of $X$ with respect to $\supp(\beta)$ (see Theorem 7.20b in~\cite{MR3967104-Wainwright19}).
This guarantee is known as the \emph{fast rate} of Lasso.
Therefore, Lasso achieves small prediction error if $\gamma_{\supp(\beta)}(X)$ is bounded away from zero.
While other conditions have been studied under which Lasso succeeds, many of them imply in fact the RE condition~\cite{MR2576316-Geer09}.\footnote{An even weaker condition is the \emph{compatibility} condition introduced by~\cite{van2007deterministic}. This condition seems less well-known than the RE condition, so we restrict our attention to the latter.}

Thus, it is of interest to identify natural families of designs with RE constant bounded away from zero.
It is known that many \emph{random} designs satisfy this property: designs with i.i.d. {(sub-)Gaussian} or {sub-exponential} entries~\cite{MR2453368-Mendelson08, MR2796091-Adamczak11}, but also designs with rows sampled from some non-spherical {(sub-)Gaussian} distributions~\cite{JMLR:v11:raskutti10a, MR3061256-Rudelson13}, and designs with rows sampled from bounded orthonormal systems~\cite{MR2417886-Rudelson08}.
(Some of these satisfy in fact the stronger restricted isometry property (RIP), which implies RE constant bounded away from zero with respect to \emph{every} small set.)

In this paper, we introduce a new family of \emph{semirandom} designs --- which we call \emph{partially-rotated} designs --- that have RE constant bounded away from zero with respect to the secret even when a subset of the design columns are arbitrarily correlated among themselves.
We note that we adopt a weak definition of the RE constant, matching that in~\cite{MR2576316-Geer09}, but weaker than in some other sources~\cite{MR2533469-Bickel09, MR3025133-Negahban12, MR3967104-Wainwright19}.
See~\Cref{def:re} for our definition and~\Cref{sec:re} for a discussion about it.

\paragraph{Notation}
Let us introduce some notation necessary to state our results.
We say a vector is $k$-sparse if it has \emph{at most} $k$ non-zero entries.
For a vector $\beta$, we denote by $\supp(\beta)$ the support of $\beta$.
For a matrix $X \in \R^{n \times d}$ and a subset $S \subseteq [d]$, we denote by $X_S \in \R^{n \times |S|}$ the submatrix of $X$ restricted to the columns indexed by $S$.
For a subset $S \subseteq [d]$, we denote by $S^c$ the set $[d] \setminus S$.

\subsection{Main result: partially-rotated designs}

Let us start by defining the restricted eigenvalue constant of a matrix. Also see~\Cref{sec:re} for a discussion about our definition.

\begin{definition}[Restricted eigenvalue constant]
\label[definition]{def:re}
For a subset $S \subseteq [d]$, define the set $\mathbb{C}(S)$ as\footnote{Our results extend straightforwardly to a more general definition of $\mathbb{C}(S)$ with $\norm{\beta_{S^c}}_1 \leq L \norm{\beta_S}_1$. Definitions with $L>1$ are often used in the analysis of Lasso in its Lagrangian form.}
\[\mathbb{C}(S) = \Set{\beta \in \R^d \enspace:\enspace \norm{\beta_{S^c}}_1 \leq \norm{\beta_S}_1}\,.\]
Then, for a matrix $X \in \R^{n \times d}$, the \emph{restricted eigenvalue constant} of $X$ with respect to $S$ is 
\[\gamma_{S}(X) = \min_{\beta \in \mathbb{C}(S)}\frac{\frac{1}{n}\Norm{X\beta}_2^2}{\Norm{\beta_S}_2^2}\,.\]
\end{definition}

Next, we define the family of matrices that we introduce: partially-rotated matrices.
These are matrices in which a subset of the matrix columns are "randomly rotated" with respect to the other matrix columns.
Specifically, for some set $S \subseteq [d]$, we will require that an arbitrary linear combination of columns in $\bm{X}_S$ is uncorrelated with an arbitrary linear combination of columns in $\bm{X}_{S^c}$ with high probability.

\begin{definition}[Partially-rotated matrix]
\label[definition]{def:rot-sep}
A random matrix $\bm{X} \in \R^{n \times d}$ is $(\epsilon, \delta)$-partially-rotated with respect to a set $S \subseteq [d]$ if, for all $\alpha \in \R^{|S|}$ and $\beta \in \R^{|S^c|}$,
\[\mathbb{P}_{\bm{X}}\Paren{\Abs{\Iprod{\frac{\bm{X}_S \alpha}{\Norm{\bm{X}_S \alpha}_2}, \frac{\bm{X}_{S^c} \beta}{\Norm{\bm{X}_{S^c} \beta}_2}}} > \epsilon} \leq e^{-\delta n}\,.\]
\end{definition}

A natural way to generate a partially-rotated matrix is to start with some arbitrary matrix $X \in \R^{n \times d}$ and then apply a "random rotation" $\bm{R} \in \R^{n \times n}$ to the columns of $X$ in $S$ (or, equivalently, to the columns of $X$ in $S^c$).
We call the resulting matrix \emph{semirandom}, because the random rotation only serves to decorrelate $X_S$ from $X_{S^c}$, and the arbitrary structure within $X_S$ and $X_{S^c}$ is preserved.
In particular, the correlations of columns within $X_S$ and $X_{S^c}$ continue to be arbitrary.

We identify two natural families of random matrices $\bm{R} \in \R^{n \times n}$ whose application leads to partial rotation: random orthogonal matrices and random matrices with independent sub-Gaussian entries.
Let us state these implications formally (we defer the proofs to~\cref{sec:deferredproofs}).

\begin{lemma}[Random rotation implies partial rotation]
\torestate
{
\label{lem:rot-to-partial}
Let $\bm{R} \in \mathbb{R}^{n \times n}$ be a random matrix such that, for a fixed unit vector $v \in \R^d$, $\bm{R} v$ is distributed uniformly over the unit sphere.
(E.g., take $\bm{R}$ to be a random orthogonal matrix.)
Then, for any matrix $X \in \R^{n \times d}$ and any set $S \subseteq [d]$, the matrix $\bm{X}' = [X_S, \bm{R} X_{S^c}] \in \R^{n \times d}$ is $(0.01, c)$-partially rotated for some constant $c > 0$.
}
\end{lemma}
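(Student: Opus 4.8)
To prove \Cref{lem:rot-to-partial}, I would reduce the statement about arbitrary linear combinations $\bm{X}_S\alpha$ and $\bm{X}_{S^c}\beta$ to a statement about two fixed unit vectors, and then use standard concentration of the inner product of a fixed unit vector with a uniformly random unit vector. The key observation is that the quantity $\bigl\langle \frac{X_S\alpha}{\norm{X_S\alpha}_2}, \frac{\bm{R}X_{S^c}\beta}{\norm{\bm{R}X_{S^c}\beta}_2}\bigr\rangle$ depends on $\alpha$ only through the unit vector $u := X_S\alpha/\norm{X_S\alpha}_2 \in \R^n$, and on $\beta$ only through the unit vector $w := X_{S^c}\beta/\norm{X_{S^c}\beta}_2 \in \R^n$ (note that $\bm{R}$ being orthogonal-like, $\norm{\bm{R}X_{S^c}\beta}_2 = \norm{X_{S^c}\beta}_2$ when $\bm{R}$ is orthogonal; more generally one writes $\bm{R}w$ for the normalization). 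So the probability we must bound is $\mathbb{P}_{\bm{R}}\bigl(\abs{\langle u, \bm{R}w\rangle} > 0.01\bigr)$ for fixed unit vectors $u, w \in \R^n$.

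\textbf{Main steps.} First I would fix $\alpha$ and $\beta$ and set $u = X_S\alpha / \norm{X_S\alpha}_2$, $w = X_{S^c}\beta / \norm{X_{S^c}\beta}_2$, both unit vectors in $\R^n$ (the degenerate cases $X_S\alpha = 0$ or $X_{S^c}\beta = 0$ make the inner product ill-defined, and we may exclude them or note the probability is vacuously fine). Second, I would invoke the hypothesis that $\bm{R}w$ is uniform on the unit sphere $S^{n-1}$ — the lemma states this for a fixed unit vector $v \in \R^d$, but the natural reading (and the case $\bm{R}$ orthogonal) gives it for any fixed unit vector in the relevant space; I would phrase the proof so that we apply the hypothesis to $v = w$. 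Third, I would apply the standard concentration bound for a random point on the sphere: for a uniform $\bm{g} \sim S^{n-1}$ and fixed unit $u$, $\mathbb{P}(\abs{\langle u, \bm{g}\rangle} > t) \le 2e^{-nt^2/2}$ (or a comparable bound via the projection formula / Lévy's lemma; one standard reference is the concentration of measure on the sphere). Taking $t = 0.01$ gives $\mathbb{P}(\abs{\langle u, \bm{R}w\rangle} > 0.01) \le 2e^{-n/20000} \le e^{-cn}$ for a suitable absolute constant $c > 0$ (absorbing the factor $2$ into a slightly smaller constant for $n$ large, and handling small $n$ trivially). Since $u$ and $w$ were the images of arbitrary $\alpha, \beta$, this establishes the $(0.01, c)$-partial-rotation property. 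Finally I would remark that when $\bm{R}$ is a random orthogonal matrix, $\bm{R}w$ is indeed uniform on the sphere, so this family is covered.

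\textbf{Main obstacle.} The only subtlety is a bookkeeping one rather than a mathematical one: making sure the uniformity hypothesis is applied to the correct vector and space. The hypothesis is stated as "for a fixed unit vector $v \in \R^d$, $\bm{R}v$ is distributed uniformly over the unit sphere," but $\bm{R} \in \R^{n \times n}$ acts on $\R^n$, and the relevant fixed vector is $w = X_{S^c}\beta/\norm{X_{S^c}\beta}_2 \in \R^n$; I would treat the hypothesis as asserting rotational invariance so that $\bm{R}w$ is uniform on $S^{n-1}$ for every fixed unit $w \in \R^n$ (which holds verbatim for random orthogonal $\bm{R}$ by invariance of Haar measure). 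A second minor point is the normalization: if $\bm{R}$ is not exactly orthogonal one should write the inner product as $\langle u, \bm{R}w\rangle / \norm{\bm{R}w}_2$, but since the uniform-on-sphere hypothesis already builds in that $\bm{R}w$ is a unit vector, no extra work is needed. Everything else is a direct appeal to spherical concentration, so the proof is short.
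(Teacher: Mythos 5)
Your proposal is correct and follows essentially the same route as the paper's proof: reduce to bounding $\mathbb{P}\bigl(\abs{\langle u, \bm{R}w\rangle} > 0.01\bigr)$ for fixed unit vectors $u,w\in\R^n$ (using that the hypothesis on $\bm{R}$ forces $\norm{\bm{R}w}_2=1$ for unit $w$, so the normalization collapses), and then apply spherical concentration. The only cosmetic difference is that the paper derives that spherical bound by representing $\bm{R}w$ as $\bm{g}/\norm{\bm{g}}_2$ with $\bm{g}\sim N(0,I_n)$ and controlling numerator and denominator separately via \cref{fact:uni-subgauss} and \cref{fact:gauss-norm}, whereas you invoke the L\'evy-type concentration bound directly; these are the same estimate.
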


\begin{lemma}[Sub-Gaussianity implies partial rotation]
\torestate
{
\label{lem:subgauss-to-partial}
Let $\bm{R} \in \mathbb{R}^{n \times n}$ be a random matrix with independent entries that have mean zero, variance $\sigma^2$, and are $(\sigma')^2$-sub-Gaussian (see \cref{def:subgauss}).
Then, for any matrix $X \in \R^{n \times d}$ and any set $S \subseteq [d]$, the matrix $\bm{X}' = [X_S, \bm{R} X_{S^c}] \in \R^{n \times d}$ is $(0.01, c \cdot \sigma^4/(\sigma')^4)$-partially rotated for some constant $c > 0$.
}
\end{lemma}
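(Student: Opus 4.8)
The plan for \cref{lem:subgauss-to-partial} is a routine numerator-versus-denominator concentration argument, made uniform over the relevant directions. Fix $\alpha\in\R^{|S|}$ and $\beta\in\R^{|S^c|}$; we may assume $u\defeq X_S\alpha\neq 0$ and $v\defeq X_{S^c}\beta\neq 0$, as otherwise the inner product in \cref{def:rot-sep} is undefined and we take the event not to occur. Since $\bm X'_S=X_S$ is deterministic while $\bm X'_{S^c}\beta=(\bm R X_{S^c})\beta=\bm R v=\Norm v_2\,\bm R\hat v$ with $\hat v\defeq v/\Norm v_2$, the normalized inner product in \cref{def:rot-sep} equals $\iprod{\hat u,\ \bm R\hat v/\Norm{\bm R\hat v}_2}$ whenever $\bm R\hat v\neq 0$, where $\hat u\defeq u/\Norm u_2$; the degenerate case $\bm R v=0$ is subsumed in the denominator estimate below. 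Hence it suffices to show that for \emph{every} pair of unit vectors $\hat u,\hat v\in\R^n$,
\[
  \mathbb{P}\!\left(\frac{\abs{\iprod{\hat u,\ \bm R\hat v}}}{\Norm{\bm R\hat v}_2}>0.01\right)\ \le\ e^{-c\,n\,\sigma^4/(\sigma')^4}
\]
for an absolute constant $c>0$; the right-hand side does not depend on $\hat u,\hat v$, so there is no need to union-bound over $\alpha,\beta$.

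First I would control the numerator. Since $\iprod{\hat u,\ \bm R\hat v}=\sum_{i,j}(\hat u_i\hat v_j)\,\bm R_{ij}$ is a weighted sum of the independent, mean-zero, $(\sigma')^2$-sub-Gaussian entries $\bm R_{ij}$, it is $(\sigma')^2$-sub-Gaussian (the parameter of such a sum being $\sum_{i,j}(\hat u_i\hat v_j)^2(\sigma')^2=(\sigma')^2$), so $\mathbb{P}(\abs{\iprod{\hat u,\ \bm R\hat v}}>t)\le 2e^{-t^2/(2(\sigma')^2)}$, and I will take $t=\tfrac{1}{200}\sigma\sqrt n$. For the denominator, write $\Norm{\bm R\hat v}_2^2=\sum_{i=1}^n Z_i^2$ with $Z_i\defeq\sum_j\bm R_{ij}\hat v_j$; these are independent (they involve disjoint rows of $\bm R$), have $\E Z_i^2=\sigma^2$ (entries independent with variance $\sigma^2$) and $\E Z_i^4\le C(\sigma')^4$ for an absolute constant $C$ (a sub-Gaussian variable has bounded fourth moment). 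Using $e^{-x}\le 1-x+x^2/2$ for $x\ge 0$, for every $\lambda\ge 0$ we get $\E[e^{-\lambda Z_i^2}]\le 1-\lambda\sigma^2+\tfrac{\lambda^2}{2}\E Z_i^4\le\exp\!\big(-\lambda\sigma^2+\tfrac{C}{2}\lambda^2(\sigma')^4\big)$, so a Chernoff bound gives $\mathbb{P}\big(\Norm{\bm R\hat v}_2^2<\tfrac{n\sigma^2}{2}\big)\le\exp\!\big(\tfrac{n\lambda\sigma^2}{2}-n\lambda\sigma^2+\tfrac{Cn}{2}\lambda^2(\sigma')^4\big)$, and optimizing at $\lambda=\sigma^2/(2C(\sigma')^4)$ yields $\mathbb{P}\big(\Norm{\bm R\hat v}_2^2<\tfrac{n\sigma^2}{2}\big)\le\exp\!\big(-\tfrac{n\sigma^4}{8C(\sigma')^4}\big)$.

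To finish I would union-bound these two failure events. On the complement of both, $\abs{\iprod{\hat u,\ \bm R\hat v}}\le\tfrac{1}{200}\sigma\sqrt n$ and $\Norm{\bm R\hat v}_2\ge\sigma\sqrt{n/2}>0$, so the ratio is at most $\tfrac{1}{200}/\sqrt{1/2}=\tfrac{\sqrt 2}{200}<0.01$ (and in particular $\bm R\hat v\neq 0$). For the probability, recall that a mean-zero $(\sigma')^2$-sub-Gaussian variable has variance at most $(\sigma')^2$, so $\sigma^2\le(\sigma')^2$ and thus $\sigma^2/(\sigma')^2\ge\sigma^4/(\sigma')^4$; this lets me weaken the numerator tail $2\exp\!\big(-\tfrac{\sigma^2 n}{2\cdot 200^2(\sigma')^2}\big)$ to $2\exp\!\big(-\tfrac{\sigma^4 n}{2\cdot 200^2(\sigma')^4}\big)$, so both failure probabilities have the form $e^{-\Omega(n\,\sigma^4/(\sigma')^4)}$; summing them and absorbing the constant factors into $c$ gives the claim. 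The one genuinely nonroutine step is the denominator lower-tail estimate: the quadratic dependence $\sigma^4/(\sigma')^4$ (rather than $\sigma^2/(\sigma')^2$) is forced there — it is precisely the cost of allowing the sub-Gaussian parameter $(\sigma')^2$ to exceed the variance $\sigma^2$ — and it is this term that dictates the final exponent; one must also use the Laplace-transform (Chernoff) bound above rather than a mere second-moment argument, which would not produce an $e^{-\Omega(n)}$ tail. (\cref{lem:rot-to-partial} is strictly easier: there $\Norm{\bm R\hat v}_2=1$ is deterministic, so only the numerator — the first coordinate of a uniformly random point on $S^{n-1}$ — needs to be bounded.)
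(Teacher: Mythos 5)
Your proof is correct, and the numerator estimate and the overall structure (bound numerator and denominator separately, union-bound, use $\sigma^2 \le (\sigma')^2$ to match the two exponents) are the same as the paper's. The genuine difference is in the denominator lower-tail estimate. The paper bounds $\Norm{\bm R w}_2^2$ by vectorizing $\bm R$, observing that $\Norm{\bm R w}_2^2 = \vectorize(\bm R)^\top A\,\vectorize(\bm R)$ for a block-diagonal $A$ with $n$ copies of $ww^\top$, computing $\Norm{A}=1$ and $\Norm{A}_F^2 = n$, and invoking the Hanson--Wright inequality (\cref{fact:hanson-wright}). You instead exploit the row-wise independence of $\bm R$ to write $\Norm{\bm R\hat v}_2^2 = \sum_i Z_i^2$ as a sum of $n$ i.i.d.\ nonnegative terms and run a lower-tail Chernoff bound, controlling $\E[e^{-\lambda Z_i^2}]$ via $e^{-x}\le 1-x+\tfrac{x^2}{2}$ and the fourth-moment bound $\E Z_i^4 \lesssim (\sigma')^4$ (which follows since $Z_i$ is $(\sigma')^2$-sub-Gaussian as a weighted sum of independent sub-Gaussians). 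Your route is strictly more elementary --- it avoids Hanson--Wright by using the special product structure of $A$ directly --- whereas the paper's argument is a shorter off-the-shelf application once Hanson--Wright is on the table; both produce the same $e^{-\Omega(n\sigma^4/(\sigma')^4)}$ exponent, and your remark that a mere Chebyshev bound would not suffice is apt. The rest of your calculation (optimizing $\lambda = \sigma^2/(2C(\sigma')^4)$, checking $\sqrt{2}/200<0.01$, and the fact that no union bound over $\alpha,\beta$ is needed since the tail is uniform over unit $\hat u,\hat v$) is all sound.
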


We note in particular that \cref{lem:subgauss-to-partial} is satisfied with $\sigma,\sigma'=\Theta(1)$ by matrices $\bm{R}$ with i.i.d. standard Gaussian entries or with i.i.d. Rademacher entries.
Also see~\Cref{mod:semi} and~\Cref{mod:obl} for two applications in which partially-rotated matrices appear as designs.

Our main result is that, for a matrix $\bm{X}$ that is partially-rotated with respect to a set $S$, with high probability any subset $S' \subseteq S$ has RE constant $\gamma_{S'}(\bm{X})$ lower bounded by $\gamma_{S'}(\bm{X}_S)$ if $n$ is large enough. 
That is, the RE constant of $\bm{X}$ with respect to any subset $S' \subseteq S$ is independent of the columns of $\bm{X}$ in $S^c$.

\begin{theorem}[Main result]
\torestate
{
\label{thm:main}
Suppose a matrix $\bm{X} \in \R^{n \times d}$ with columns of norm $\sqrt{n}$ is $(0.01, c)$-partially-rotated with respect to a set $S \subseteq [d]$ for some constant $c > 0$.
Then there exists some constant $C > 0$ such that, with probability at least $1-e^{-\Omega(n)}$, for all $S' \subseteq S$ such that $n \geq C |S'| \log d / \gamma_{S'}(\bm{X}_S)$ it holds that
\[\gamma_{S'}(\bm{X}) \geq 0.9 \gamma_{S'}(\bm{X}_S)\,.\]
}
\end{theorem}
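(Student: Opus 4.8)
The plan is to take an arbitrary $\beta \in \mathbb{C}(S')$ with $S' \subseteq S$, decompose it as $\beta = \beta_S + \beta_{S^c}$ (supported on $S$ and $S^c$ respectively), and show that $\frac{1}{n}\Norm{\bm{X}\beta}_2^2$ is not much smaller than $\frac{1}{n}\Norm{\bm{X}_S\beta_S}_2^2$, which by definition of $\gamma_{S'}(\bm{X}_S)$ is at least $\gamma_{S'}(\bm{X}_S)\Norm{\beta_{S'}}_2^2$ (using that $\beta_S \in \mathbb{C}(S')$ as a vector supported on $S$, since $\Norm{\beta_{S \setminus S'}}_1 \le \Norm{\beta_{S^c}}_1 + \Norm{\beta_{S\setminus S'}}_1 = \Norm{\beta_{(S')^c}}_1 \le \Norm{\beta_{S'}}_1$). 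Writing $u = \bm{X}_S\beta_S$ and $w = \bm{X}_{S^c}\beta_{S^c}$, we have $\Norm{\bm{X}\beta}_2^2 = \Norm{u}_2^2 + 2\iprod{u,w} + \Norm{w}_2^2 \ge \Norm{u}_2^2 - 2\abs{\iprod{u,w}}$, so the whole game is to control the cross term $\abs{\iprod{u,w}}$ by the partial-rotation property. Applying \cref{def:rot-sep} (with $\alpha = \beta_S$, the $S^c$-part $=\beta_{S^c}$), with probability $1 - e^{-\delta n}$ for that fixed $\beta$ we get $\abs{\iprod{u,w}} \le \epsilon \Norm{u}_2\Norm{w}_2$, and then $\Norm{\bm{X}\beta}_2^2 \ge \Norm{u}_2^2 - 2\epsilon\Norm{u}_2\Norm{w}_2$.

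The key remaining point is to bound $\Norm{w}_2 = \Norm{\bm{X}_{S^c}\beta_{S^c}}_2$ in terms of $\Norm{u}_2$. Here we use the $\ell_1$ cone constraint plus column normalization: $\Norm{w}_2 \le \sqrt{n}\,\Norm{\beta_{S^c}}_1 \le \sqrt{n}\,\Norm{\beta_{S'}}_1 \le \sqrt{n}\sqrt{|S'|}\,\Norm{\beta_{S'}}_2 \le \sqrt{n|S'|}\cdot\frac{1}{\sqrt{\gamma_{S'}(\bm{X}_S)}}\cdot\frac{1}{\sqrt{n}}\Norm{u}_2 = \sqrt{|S'|/\gamma_{S'}(\bm{X}_S)}\,\Norm{u}_2$. (The triangle-inequality bound $\Norm{\bm{X}_{S^c}\beta_{S^c}}_2 \le \sum_{j\in S^c}|\beta_j|\Norm{(\bm{X})_j}_2 = \sqrt{n}\Norm{\beta_{S^c}}_1$ uses the norm-$\sqrt{n}$ normalization of the columns.) Plugging in, $\Norm{\bm{X}\beta}_2^2 \ge \Norm{u}_2^2\big(1 - 2\epsilon\sqrt{|S'|/\gamma_{S'}(\bm{X}_S)}\big)$. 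With $\epsilon = 0.01$, the bracket is at least $0.9$ as long as $|S'|/\gamma_{S'}(\bm{X}_S) \le (0.1/0.02)^2 = 25$, i.e.\ when $|S'|$ is a small constant relative to $\gamma_{S'}(\bm{X}_S)$ — but the theorem only assumes $n \ge C|S'|\log d/\gamma_{S'}(\bm{X}_S)$, which is much weaker. So a single application of the pointwise bound is not enough; this is the main obstacle, and it is resolved by a net argument.

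To handle general $|S'|$ we do not try to bound the cross term by $\epsilon$ uniformly over the full cone; instead, for a fixed $S'$ we build an $\eta$-net (in $\ell_2$, after rescaling to $\Norm{\beta_{S'}}_2 = 1$, say) over the relevant compact slice of $\mathbb{C}(S')$ — or more precisely over the pair of directions $(u/\Norm{u}_2, w/\Norm{w}_2)$ — of size $e^{O(|S'|\log d)}$ (the cone constraint forces $\beta_{(S')^c}$ to have bounded $\ell_1$ norm, and standard covering bounds give a net of this cardinality in the $n$-dimensional image space, or one nets $\beta$ directly). Take a union bound of \cref{def:rot-sep} over this net: since $e^{-\delta n}\cdot e^{O(|S'|\log d)} \le e^{-\delta n/2}$ once $n \ge C|S'|\log d/\gamma_{S'}(\bm{X}_S)$ with $C$ large (here $\delta = c$), with probability $1-e^{-\Omega(n)}$ every net point has cross-correlation $\le 0.01$, and a short Lipschitz/approximation argument transfers this to $\le 0.02$, say, on the whole slice. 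Finally we union-bound over all $S' \subseteq S$: there are at most $2^{|S|} \le 2^d$ of them, but we only need the conclusion for $S'$ with $n \ge C|S'|\log d/\gamma_{S'}(\bm{X}_S)$, and the per-$S'$ failure probability $e^{-\Omega(n)}$ with the $C|S'|\log d$ slack absorbs the $\binom{|S|}{|S'|} \le d^{|S'|}$ factor, leaving total failure probability $e^{-\Omega(n)}$. Redoing the computation above with the improved cross-term bound $0.02$ on the rescaled directions gives $\Norm{\bm{X}\beta}_2^2 \ge (1 - 0.04\sqrt{|S'|/\gamma_{S'}(\bm{X}_S)})\Norm{u}_2^2$ — wait, that still has the bad $\sqrt{|S'|}$ factor, so the real fix is that the net is taken over $\beta$ itself and the cross term $\abs{\iprod{u,w}}$ is bounded by $0.01\Norm{u}_2\Norm{w}_2$ with $\Norm{w}_2/\Norm{u}_2$ no longer needing a worst-case bound: on a fixed net point the ratio is a fixed number, and we only need $2\cdot 0.01\cdot\Norm{u}_2\Norm{w}_2 \le 0.1\Norm{u}_2^2$ whenever $\Norm{w}_2 \le 5\Norm{u}_2$, while if $\Norm{w}_2 > 5\Norm{u}_2$ we instead use $\Norm{\bm{X}\beta}_2^2 = \Norm{u+w}_2^2 \ge (\Norm{w}_2 - \Norm{u}_2)^2$ directly together with the deterministic bound $\Norm{w}_2 \le \sqrt{|S'|/\gamma_{S'}(\bm{X}_S)}\Norm{u}_2$ — no: cleanest is simply to observe that in the regime $\Norm{w}_2 > C'\Norm{u}_2$ for suitable $C'$, the partial-rotation bound already gives $\Norm{u+w}_2^2 \ge \Norm{w}_2^2(1 - 2\epsilon\Norm{u}_2/\Norm{w}_2 - \ldots) > 0$ contradicting nothing, so one argues that $\Norm{u}_2^2 \ge \Norm{u+w}_2^2/(1+\text{const})$ can't happen... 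Concretely: from $\Norm{u+w}_2^2 \ge \Norm{u}_2^2 - 2\epsilon\Norm{u}_2\Norm{w}_2$ and $\Norm{u+w}_2^2 \ge \Norm{w}_2^2 - 2\epsilon\Norm{u}_2\Norm{w}_2$, adding a suitable convex combination and optimizing over $\Norm{w}_2/\Norm{u}_2$ yields $\Norm{u+w}_2^2 \ge (1-2\epsilon)^2 \cdot \tfrac{1}{2}\min(\Norm{u}_2^2,\ldots)$; the honest statement is $\Norm{u+w}_2 \ge \Norm{u}_2 - \epsilon\Norm{w}_2$ is false, but $\Norm{u+w}_2^2 \ge (1-\epsilon)(\Norm{u}_2^2+\Norm{w}_2^2) - \ldots$ — I will use: for unit $\hat u,\hat w$ with $|\iprod{\hat u,\hat w}| \le \epsilon$, $\Norm{au + bw}_2^2 \ge (1-\epsilon)(a^2\Norm{u}_2^2 + b^2\Norm{w}_2^2)$ for all scalars $a,b$ of appropriate sign — hence with $a=b=1$, $\Norm{\bm{X}\beta}_2^2 \ge (1-\epsilon)\Norm{u}_2^2 \ge (1-\epsilon)\gamma_{S'}(\bm{X}_S)\Norm{\beta_{S'}}_2^2 \ge 0.9\gamma_{S'}(\bm{X}_S)\Norm{\beta_{S'}}_2^2$, which is exactly the claim. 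The net is what makes $|\iprod{\hat u,\hat w}| \le \epsilon$ hold simultaneously for all $\beta$, and the counting $e^{O(|S'|\log d)}$ versus $e^{-cn}$ is where the hypothesis $n \ge C|S'|\log d/\gamma_{S'}(\bm{X}_S)$ enters.
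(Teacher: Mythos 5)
Your high-level plan matches the paper's: decompose $\beta$ into its $S$- and $S^c$-parts, observe $\beta_S\in\mathbb{C}(S')$, and control the cross term $\iprod{\bm{X}_S\beta_S,\bm{X}_{S^c}\beta_{S^c}}$ via partial rotation, union-bounding over $S'$ at the end. You also correctly identify the central obstacle --- a single application of partial rotation loses a $\sqrt{|S'|/\gamma_{S'}(\bm{X}_S)}$ factor --- and the final elementary inequality $\Norm{au+bw}_2^2\ge(1-\epsilon)(a^2\Norm{u}_2^2+b^2\Norm{w}_2^2)$ for unit vectors with $|\iprod{\hat u,\hat w}|\le\epsilon$ is correct. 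But the mechanism you propose to overcome the obstacle does not close, and the visible backtracking in your own text signals this.

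The specific gaps are these. First, you net ``over the pair of directions $(u/\Norm{u}_2,w/\Norm{w}_2)$,'' but those live in $S^{n-1}\times S^{n-1}$, so such a net has size $e^{O(n)}$, which overwhelms the $e^{-\delta n}$ failure probability and defeats the union bound. You acknowledge the alternative of netting $\beta$ directly via $\ell_1$-ball covering numbers, but you don't carry it out, and the cardinality you quote, $e^{O(|S'|\log d)}$, is not justified (the correct Maurey-type bound carries an additional $1/\gamma_{S'}(\bm{X}_S)$ in the exponent, which you need to account for against $n$). Second, and more fundamentally, the ``short Lipschitz/approximation argument'' to transfer the cross-correlation bound from net points to the whole cone is exactly where the argument breaks: the map $\beta_{S^c}\mapsto \bm{X}_{S^c}\beta_{S^c}/\Norm{\bm{X}_{S^c}\beta_{S^c}}_2$ is not Lipschitz when $\Norm{\bm{X}_{S^c}\beta_{S^c}}_2$ is small, and if you try to make the transfer precise on the normalized inner product $|\iprod{\hat u,\hat w}|$, you reintroduce the lost $\sqrt{|S'|/\gamma_{S'}(\bm{X}_S)}$ factor, which is the very thing you were trying to beat. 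Your own paragraph oscillates between several unfinished fixes (``wait, that still has the bad $\sqrt{|S'|}$ factor,'' ``no: cleanest is\ldots,'' ``the honest statement is\ldots is false'') precisely because the Lipschitz transfer does not go through as stated.

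The missing idea is the approximate Carath\'eodory sparsification (\cref{cor:caratheodory}). The paper does not try to bound the normalized cross-correlation uniformly over the whole cone. Instead it (i) proves partial rotation $\Rightarrow (s,S,\epsilon)$-RNO by a net over \emph{$s$-sparse} $\alpha,\beta$ only (\cref{thm:rot-sep-rno}), which is where the $d^{2s}$ union bound enters cleanly; and (ii) for a general $z\in\mathbb{C}(S')$, replaces $\alpha=z_S$ and $\beta=z_{S^c}$ by $s$-sparse approximants $\alpha',\beta'$ with $\ell_2$ image-space error $O(\sqrt{n/s})\Norm{\cdot}_1$, applies RNO only to the sparse pair (avoiding any need to renormalize), and absorbs the approximation error via the triangle inequality after using $\Norm{\alpha}_1\le 2\sqrt{|S'|}\Norm{\alpha_{S'}}_2\le 2\sqrt{|S'|/(n\gamma_{S'}(\bm{X}_S))}\Norm{X_S\alpha}_2$ (\cref{thm:rno-to-re}). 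Choosing $s\gtrsim|S'|/\gamma_{S'}(\bm{X}_S)$ makes the error a small fraction of $\Norm{X_S\alpha}_2$, and the hypothesis $n\ge C|S'|\log d/\gamma_{S'}(\bm{X}_S)$ is exactly what makes the net over $s$-sparse combinations affordable. Until you introduce this sparsification step and apply the partial-rotation concentration only to sparse combinations (with triangle-inequality transfer, not normalized-direction transfer), the proposal has a genuine hole.
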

By the definition of the RE constant, we also have that $\gamma_{S'}(\bm{X}) \leq \gamma_{S'}({\bm{X}}_S)$, so the conclusion of the theorem holds in fact with approximate equality.

Let us state now explicitly the implication of~\Cref{thm:main} for the prediction error of Lasso.
Also see~\cref{eq:error-l1}, \cref{eq:error-l2}, and the surrounding discussion for implications for the recovery of $\beta$ itself.

\begin{corollary}[Main result for Lasso]
\torestate
{
\label{cor:main}
In the sparse linear regression model in~\Cref{eq:slr}, suppose that the design $\bm{X} \in \R^{n \times d}$ with columns of norm $\sqrt{n}$ is $(0.01, c)$-partially-rotated with respect to a set $S \subseteq [d]$ for some constant $c > 0$.
Then, if $\supp(\beta) \subseteq S$, the Lasso estimate $\hat{\beta}_{\mathrm{Lasso}} \in \R^d$ in~\Cref{eq:lasso} satisfies with high probability
\[\frac{1}{n} \Norm{\bm{X}\hat{\beta}_{\mathrm{Lasso}} - \bm{X}\beta}_2^2 \leq O\Paren{\sigma^2 \frac{k \log d}{\gamma_{\supp(\beta)}(\bm{X}_S) \cdot n}}\,.\]
}
\end{corollary}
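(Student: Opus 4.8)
The plan is to combine the textbook fast-rate analysis of the constrained Lasso with our structural result \Cref{thm:main}. Throughout, condition on the design $\bm{X}$ and treat the noise $\bm{w}$ as the only remaining source of randomness; at the very end take a union bound with the event supplied by \Cref{thm:main}, which depends only on $\bm{X}$. Write $S_\beta := \supp(\beta)$ (so $S_\beta \subseteq S$ and $|S_\beta| \leq k$) and $\gamma_\star := \gamma_{S_\beta}(\bm{X}_S)$. I would split into two regimes according to whether $n$ is above or below the threshold $C k \log d / \gamma_\star$ from \Cref{thm:main}.

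In the main regime $n \geq C k \log d / \gamma_\star$, the first step is to establish the fast rate of Lasso in the target form \cref{eq:one}: with high probability over $\bm{w}$,
\[
\frac{1}{n}\Norm{\bm{X}\hat{\beta}_{\mathrm{Lasso}} - \bm{X}\beta}_2^2 \;\leq\; O\!\left(\sigma^2 \frac{k \log d}{\gamma_{S_\beta}(\bm{X}) \cdot n}\right).
\]
Because \Cref{def:re} uses the \emph{weak} RE constant — with the support-restricted norm $\Norm{\beta_{S_\beta}}_2$ rather than the full norm $\Norm{\beta}_2$ in the denominator, unlike the stronger variant in Theorem~7.20b of~\cite{MR3967104-Wainwright19} — I would not merely cite this bound but re-derive it in a self-contained way (or cite \cite{MR2576316-Geer09}): the basic inequality $\Norm{\bm{X}(\hat{\beta}-\beta)}_2^2 \leq 2\iprod{\bm{w},\, \bm{X}(\hat{\beta}-\beta)}$ together with feasibility $\Norm{\hat\beta}_1 \leq \Norm{\beta}_1$ forces $\hat\beta - \beta \in \mathbb{C}(S_\beta)$, so $\Norm{\hat\beta-\beta}_1 \leq 2\sqrt{k}\,\Norm{(\hat\beta-\beta)_{S_\beta}}_2$; then bounding $\Norm{\tfrac1n \bm{X}^\top\bm{w}}_\infty = O(\sigma\sqrt{\log d / n})$ by Gaussian concentration (using the column-norm normalization $\sqrt n$) and using \Cref{def:re} to bound $\Norm{(\hat\beta-\beta)_{S_\beta}}_2^2$ by $\tfrac{1}{n\,\gamma_{S_\beta}(\bm X)}\Norm{\bm X(\hat\beta-\beta)}_2^2$ yields the display after rearranging a quadratic in $\Norm{\bm X(\hat\beta-\beta)}_2$. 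Next, since $\bm{X}$ has columns of norm $\sqrt{n}$, is $(0.01,c)$-partially rotated with respect to $S$, and $n \geq C k \log d/\gamma_\star \geq C|S_\beta|\log d/\gamma_\star$, \Cref{thm:main} applies with $S' = S_\beta$ and gives, with probability $1 - e^{-\Omega(n)}$ over $\bm{X}$, that $\gamma_{S_\beta}(\bm{X}) \geq 0.9\,\gamma_\star$. Substituting this into the display and taking a union bound over the two events proves the claim in this regime.

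In the complementary regime $n < C k \log d/\gamma_\star$, I would fall back on a trivial bound valid for the constrained Lasso regardless of the design: from the same basic inequality, $\Norm{\bm{X}(\hat\beta-\beta)}_2^2 \leq 2\iprod{\bm{w},\, \bm{X}(\hat\beta-\beta)} \leq 2\Norm{\bm{w}}_2\Norm{\bm{X}(\hat\beta-\beta)}_2$, hence $\tfrac1n\Norm{\bm{X}(\hat\beta-\beta)}_2^2 \leq \tfrac4n\Norm{\bm{w}}_2^2 = O(\sigma^2)$ with high probability (as $\Norm{\bm w}_2^2$ concentrates around $\sigma^2 n$). But $n < C k \log d/\gamma_\star$ forces $\sigma^2 k \log d/(\gamma_\star n) = \Omega(\sigma^2)$, so this $O(\sigma^2)$ bound is absorbed into $O(\sigma^2 k\log d/(\gamma_\star n))$, which is exactly the asserted bound. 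Combining the two regimes completes the argument.

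I do not expect a genuine obstacle here: the corollary is essentially a repackaging of \Cref{thm:main}. The points that require care are (i) checking that the fast rate remains valid with the \emph{weak} RE constant of \Cref{def:re}, not the stronger variants used in some references — handled by the self-contained derivation above; (ii) keeping the two independent sources of randomness, $\bm{X}$ and $\bm{w}$, cleanly separated and combining their failure probabilities by a union bound; and (iii) matching the sparsity-versus-sample-size bookkeeping so that the constant $C$ from \Cref{thm:main} is the one that appears implicitly in the statement.
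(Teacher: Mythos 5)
Your proposal is correct and follows essentially the same route as the paper: split on whether $n$ exceeds the threshold $Ck\log d/\gamma_{\supp(\beta)}(\bm X_S)$, in the main regime derive the fast rate with the weak RE constant of \Cref{def:re} and substitute $\gamma_{\supp(\beta)}(\bm X)\geq 0.9\,\gamma_{\supp(\beta)}(\bm X_S)$ from \Cref{thm:main}, and in the complementary regime invoke the trivial $O(\sigma^2)$ bound, which the small-$n$ condition makes dominant. The only difference is cosmetic: the paper phrases the fast-rate step as a short adjustment to the argument around Equation 7.28 of Wainwright, whereas you re-derive it from the basic inequality, but the content and bookkeeping match.
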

The prediction error bound in~\Cref{cor:main} matches the \emph{fast rate} in \cref{eq:one}, but with restricted eigenvalue constant independent of the columns of $\bm{X}$ in $S^c$ --- or, in other words, as if the columns of $\bm{X}$ in $S^c$ were perfectly well-conditioned.
We remark that, to obtain this fast rate, it was important to prove~\Cref{thm:main} in the regime $n \gtrsim |S'| \log d / \gamma_{S'}(\bm{X}_S)$; a stronger assumption on $n$ would have implied a worse prediction error bound in~\Cref{cor:main}.

Finally, let us illustrate the applicability of the result through two specific models of partially-rotated designs.

\begin{model}[Semirandom i.i.d. Gaussian design]
\label[model]{mod:semi}
Consider an arbitrary matrix $X \in \R^{n \times d}$ with columns of norm $\sqrt{n}$ and an arbitrary $k$-sparse vector $\beta \in \R^{d}$.
Let the design $\bm{X}' \in \R^{n \times d}$ have columns $\bm{X}'_i \stackrel{i.i.d.}{\sim} N(0, I_n)$ (rescaled to have norm $\sqrt{n}$) if $i \in \supp(\beta)$ and $\bm{X}'_i = X_i$ otherwise.
Then $\bm{X}'$ is $(0.01, \Omega(1))$-partially-rotated with respect to $\supp(\beta)$.
\end{model}
\begin{proof}
The conclusion follows by \cref{lem:rot-to-partial} because of the rotation invariance of the Gaussian distribution.
\end{proof}

\Cref{mod:semi} is a relaxation of the well-studied model in which all entries of the design are i.i.d. standard Gaussian.
Standard results imply that with high probability ${\gamma_{\supp(\beta)}(\bm{X}_{\supp(\beta)}) \geq \Omega(1)}$ if $n \gtrsim k \log d$.
Therefore, when the secret is $\beta$, by~\Cref{cor:main} Lasso achieves prediction error $O(\sigma^2 k \log d / n)$.
This matches the error that would be achieved if all entries of the design were i.i.d. standard Gaussian, so to obtain this guarantee it suffices that the columns selected by the secret have planted i.i.d. standard Gaussian entries.

\begin{model}[Rotated adversary design]
\label[model]{mod:obl}
Consider an arbitrary matrix $X \in \R^{n \times d}$ with columns of norm $\sqrt{n}$.
Suppose an adversary selects $d'$ arbitrary vectors $a_1, \ldots, a_{d'} \in \R^{n}$ of norm $\sqrt{n}$.
Let $\bm{R} \in \R^{n \times n}$ be a random matrix corresponding to either \cref{lem:rot-to-partial} or \cref{lem:subgauss-to-partial}, and let the design $\bm{X}' \in \R^{n \times (d+d')}$ have columns $X_1, \ldots, X_d$, and $\bm{R} a_1, \ldots, \bm{R} a_{d'}$.
Then $\bm{X}'$ is partially-rotated with respect to $\{1, \ldots, d\}$.
\end{model}

To motivate~\Cref{mod:obl}, let us reinterpret the sparse linear regression problem as a denoising problem: given a sparse linear combination $\bm{y} = X \beta + \bm{w}$ of vectors $X_1, \ldots, X_d$ with added noise\linebreak $\bm{w} \sim N(0, \sigma^2 \cdot I_n)$, the goal is to estimate the sparse linear combination $X \beta$.
In this view, there is no need to think of the rows of $X$ as corresponding to "samples" --- the denoising problem makes sense on its own.
Then we can interpret~\Cref{mod:obl} as an oblivious adversary model, in which an adversary that does not know $X_1, \ldots, X_d$ is allowed to introduce new vectors in the input.
One way to model the adversary not knowing $X_1, \ldots, X_d$ is to say that the vectors it introduces should not match $X_1, \ldots, X_d$ more than up to a random rotation.
\Cref{cor:main} shows that for such an adversary Lasso has denoising guarantees that are independent of the correlations among the outlier vectors.

\subsection{Our framework: restricted normalized orthogonality}
\label{sec:intro-rno}

To prove~\Cref{thm:main}, we found it useful to introduce a new deterministic property of matrices that we call \emph{restricted normalized orthogonality} (RNO).
Then, we prove that partially-rotated matrices satisfy RNO with high probability, and that RNO implies the desired property for the RE constants of the matrix.

The RNO has a relatively simple definition and may be of independent interest, so we present it in some detail.
Informally, a matrix satisfies $(s,S,0.01)$-RNO if the $s$-sparse linear combinations of columns in $X_S$ have small correlations with the $s$-sparse linear combinations of those in $X_{S^c}$.

\begin{definition}[Restricted normalized orthogonality]
\label[definition]{def:rno}
Let $s \in \mathbb{N}$, $S \subseteq [d]$, and $\epsilon \geq 0$. 
A matrix $X \in \R^{n \times d}$ satisfies the $(s, S, \epsilon)$-\emph{restricted normalized orthogonality} (RNO) condition if, for all $s$-sparse $\alpha \in \R^{|S|}$ and $s$-sparse $\beta \in \R^{|S^c|}$,
\[\Abs{\Iprod{ \frac{X_S \alpha}{\Norm{X_S \alpha}_2}, \frac{X_{S^c} \beta}{\Norm{X_{S^c} \beta}_2} }} \leq \epsilon\,,\]
where we regard the inner product as $0$ when $X_S \alpha = 0$ or $X_{S^c} \beta = 0$.
\end{definition}

We then prove that, if $X$ satisfies $(\cdot , S, \cdot)$-RNO with appropriate parameters, then the RE constant of $X$ with respect to subsets $S' \subseteq S$ is independent of $X_{S^c}$.

\begin{proposition}[RNO implies RE]
\torestate
{
\label{thm:rno-to-re}
Let $s \in \mathbb{N}$, $S \subseteq [d]$, and $\epsilon \geq 0$. 
Suppose a matrix $X \in \R^{n \times d}$ with columns of norm $\sqrt{n}$ satisfies the $(s, S, \epsilon)$-RNO condition.
Then there exists some constant $C > 0$ such that, for any $S' \subseteq S$,
\[\gamma_{S'}(X) \geq \Paren{1 - \epsilon - C\sqrt{\frac{|S'|}{\gamma_{S'}(X_S) \cdot s}}} \gamma_{S'}(X_S)\,.\]
}
\end{proposition}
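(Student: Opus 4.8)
The plan is to fix an arbitrary $\beta \in \mathbb{C}(S')$, show that
\[
\tfrac1n\|X\beta\|_2^2 \;\ge\; \Paren{1 - \epsilon - C\sqrt{\tfrac{|S'|}{\gamma_{S'}(X_S)\,s}}}\,\gamma_{S'}(X_S)\,\|\beta_{S'}\|_2^2\mcom
\]
and then take the minimum over $\mathbb{C}(S')$. Write $\gamma := \gamma_{S'}(X_S)$. If $S' = \emptyset$ or $\gamma = 0$ there is nothing to prove, and since $\gamma_{S'}(X) \ge 0$ always, the statement is also vacuous whenever the bracket is nonpositive; so we may assume $C\sqrt{|S'|/(\gamma s)} < 1$. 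As $\gamma \le 1$ (take a single column of $X_S$ indexed in $S'$) and $C \ge 1$, this forces $|S'| < s$, hence $u_{S'}$ below is $s$-sparse. Decompose $X\beta = X_S u + X_{S^c} v$ with $u := \beta_S \in \R^{|S|}$ and $v := \beta_{S^c} \in \R^{|S^c|}$.

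There are two ingredients. First, since $S' \subseteq S$, the cone constraint $\|\beta_{(S')^c}\|_1 \le \|\beta_{S'}\|_1$ gives simultaneously $\|u_{S\setminus S'}\|_1 \le \|u_{S'}\|_1$ — so $u$ lies in the cone of $X_S$ with respect to $S'$, and therefore $\tfrac1n\|X_S u\|_2^2 \ge \gamma\|u_{S'}\|_2^2 = \gamma\|\beta_{S'}\|_2^2$ — and $\|v\|_1 \le \|\beta_{S'}\|_1 \le \sqrt{|S'|}\,\|\beta_{S'}\|_2 \le \sqrt{|S'|/(\gamma n)}\,\|X_S u\|_2$. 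Second,
\[
\tfrac1n\|X\beta\|_2^2 \;=\; \tfrac1n\|X_S u\|_2^2 + \tfrac1n\|X_{S^c}v\|_2^2 + \tfrac2n\Iprod{X_S u,\,X_{S^c}v}\mper
\]
Keeping the nonnegative term $\|X_{S^c}v\|_2^2$ (which we will use to absorb part of the cross term), it suffices to show $2\Abs{\Iprod{X_S u, X_{S^c}v}} \le \epsilon\big(\|X_S u\|_2^2 + \|X_{S^c}v\|_2^2\big) + C'\sqrt{|S'|/(\gamma s)}\,\|X_S u\|_2^2$; combined with the restricted-eigenvalue bound on $\|X_S u\|_2$ this yields the displayed inequality with $C = C'$.

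To bound the cross term I would use the standard shelling argument. Order the coordinates of $v$ by decreasing magnitude and split them into consecutive blocks $T_1, T_2, \dots$ of size $s$ (so $T_1$ carries the $s$ largest entries); then $\|v_{T_{j+1}}\|_2 \le \tfrac1{\sqrt s}\|v_{T_j}\|_1$ and hence $\sum_{j\ge2}\|v_{T_j}\|_2 \le \tfrac1{\sqrt s}\|v\|_1$. Treat $u$ analogously, taking $u_{S'}$ as a distinguished ($s$-sparse) block and shelling the coordinates of $u_{S\setminus S'}$ into $s$-blocks $P_1, P_2, \dots$, again with the $\tfrac1{\sqrt s}$ $\ell_1\!\to\!\ell_2$ gain controlled by $\|u_{S\setminus S'}\|_1 \le \|u_{S'}\|_1 \le \sqrt{|S'|/(\gamma n)}\|X_S u\|_2$. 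Expanding $\Iprod{X_S u, X_{S^c}v}$ over all block pairs, every pairing of an $s$-sparse $u$-block with an $s$-sparse $v$-block is controlled by $(s,S,\epsilon)$-RNO: $\Abs{\Iprod{X_S u_Q, X_{S^c}v_{T_j}}} \le \epsilon\|X_S u_Q\|_2\|X_{S^c}v_{T_j}\|_2$. The head–head pairing $\Iprod{X_S u_{S'}, X_{S^c}v_{T_1}}$ is the dominant one and, after moving its factors $\|X_S u_{S'}\|_2 \le \|X_S u\|_2 + (\text{shelled tail of }u)$ and $\|X_{S^c}v_{T_1}\|_2 \le \|X_{S^c}v\|_2 + (\text{shelled tail of }v)$ and applying $2xy \le x^2+y^2$, contributes the clean $\epsilon(\|X_S u\|_2^2 + \|X_{S^c}v\|_2^2)$ term plus cross terms; every remaining contribution involves a shelled tail block, which one estimates by $\|X_S w\|_2 \le \sqrt n\|w\|_1$ together with the $\tfrac1{\sqrt s}$ shelling gain and the bounds $\|v\|_1,\|u_{S\setminus S'}\|_1 \le \sqrt{|S'|/(\gamma n)}\|X_S u\|_2$, and these assemble (a geometric-type sum over block indices) into the correction $O\big(\sqrt{|S'|/(\gamma s)}\big)\|X_S u\|_2^2$.

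The hard part is exactly this last accounting. The naive expansion — bounding $\Abs{\Iprod{X_S u, X_{S^c}v}}$ by $\epsilon\,(\sum_Q \|X_S u_Q\|_2)(\sum_j \|X_{S^c}v_{T_j}\|_2)$ and estimating each sum crudely — loses a factor $\sqrt{|S'|/\gamma}$ (the $\ell_1/\ell_2$ distortion of cone vectors, via $\|X_S u_Q\|_2 \le \sqrt n\|u_Q\|_1$ and the restricted-eigenvalue lower bound on $\|X_S u\|_2$) on *every* RNO application, which would leave $\epsilon\cdot|S'|/\gamma$ rather than $\epsilon + \sqrt{|S'|/(\gamma s)}$ in the final bound — useless unless $|S'| \lesssim \gamma$. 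So the work is to organize the shelled expansion so that $\epsilon$ is extracted with coefficient $1$ from a single clean application of RNO to the head–head pair, while every other term genuinely carries the shelling gain $\tfrac1{\sqrt s}$ and is therefore absorbed into $O\big(\sqrt{|S'|/(\gamma s)}\big)\|X_S u\|_2^2$ independently of $\epsilon$; here one must use the precise normalization in \cref{def:rno} (division by $\|X_S\alpha\|_2$ and $\|X_{S^c}\beta\|_2$, not by $\|\alpha\|_2,\|\beta\|_2$) in tandem with the restricted-eigenvalue control of $\|X_S u\|_2$.
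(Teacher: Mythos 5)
Your shelling-based approach has a genuine gap, concentrated exactly in the claim that ``every remaining contribution \ldots carries the shelling gain $\tfrac{1}{\sqrt{s}}$.'' The shelling inequality $\sum_{j \ge 2}\Norm{v_{T_j}}_2 \le \tfrac{1}{\sqrt s}\Norm{v}_1$ is true, but to feed a tail block into RNO you need $\Norm{X_{S^c} v_{T_j}}_2$, and the only control available without any conditioning hypothesis on $X_{S^c}$ is via column norms: $\Norm{X_{S^c} v_{T_j}}_2 \le \sqrt{n}\Norm{v_{T_j}}_1 \le \sqrt{ns}\,\Norm{v_{T_j}}_2$. The factor $\sqrt{s}$ exactly cancels the shelling gain, so $\sum_{j\ge 2}\Norm{X_{S^c}v_{T_j}}_2 \le \sqrt{n}\Norm{v}_1$, the same trivial bound you get with no shelling at all. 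This is precisely the $\sqrt{|S'|/\gamma}$ loss you diagnose in your last paragraph, and it does not go away with careful bookkeeping: already the single head--head term, expanded via $\epsilon\Norm{X_S u_{S'}}_2\Norm{X_{S^c}v_{T_1}}_2 \le \epsilon\big(\Norm{X_S u}_2 + \Norm{X_S u_{S\setminus S'}}_2\big)\big(\Norm{X_{S^c}v}_2 + \Norm{X_{S^c}v_{T_1^c}}_2\big)$, produces a cross term $\epsilon\Norm{X_S u}_2\Norm{X_{S^c}v_{T_1^c}}_2$ that can be as large as $\epsilon\sqrt{|S'|/\gamma}\,\Norm{X_S u}_2^2$, exceeding the target $C\sqrt{|S'|/(\gamma s)}\Norm{X_S u}_2^2$ by a factor of $\epsilon\sqrt{s}$. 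Shelling earns its $1/\sqrt s$ in the RIP setting because there $\Norm{X v_{T_j}}_2 \approx \sqrt{n}\Norm{v_{T_j}}_2$; here $X_{S^c}$ is arbitrary, so thresholding $v$ does not control the image $X_{S^c}v$.

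The missing idea is Maurey's empirical method / approximate Carath\'eodory (\cref{cor:caratheodory}): there exist $s$-sparse $\alpha', \beta'$ with $\Norm{X_S(\alpha'-\alpha)}_2 \le 2\sqrt{n/s}\,\Norm{\alpha}_1$ and $\Norm{X_{S^c}(\beta'-\beta)}_2 \le 2\sqrt{n/s}\,\Norm{\beta}_1$. The sparse approximant here is built by random subsampling of coordinates weighted by $|\beta_i|$, so it controls the \emph{image} under $X$ directly and delivers the $1/\sqrt s$ gain with no conditioning assumption on $X_{S^c}$ --- something thresholding fundamentally cannot do. With this in hand the paper applies RNO only once, to the sparse pair $(\alpha', \beta')$, getting $\Norm{X_S\alpha' + X_{S^c}\beta'}_2 \ge \sqrt{1-\epsilon}\,\Norm{X_S\alpha'}_2$, and then reconnects to $(\alpha,\beta)$ by the triangle inequality: $\epsilon$ is extracted with coefficient $1$ from a single clean RNO application, and all sparsification errors land additively as $O(\sqrt{n/s})\Norm{\alpha}_1$. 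Your target reduction --- bounding the cross term by $\epsilon\big(\Norm{X_S u}_2^2 + \Norm{X_{S^c}v}_2^2\big) + C'\sqrt{|S'|/(\gamma s)}\Norm{X_S u}_2^2$ --- is in the right spirit, but the tool that delivers it is Maurey, not shelling.
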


Hence, when $(s,S,\epsilon)$-RNO holds with $s \gtrsim |S'| / \gamma_{S'}(X_S)$ and with $\epsilon$ bounded away from one, then $\gamma_{S'}(X) \geq \Omega(\gamma_{S'}(X_S))$.
This is exactly the type of conclusion that we want in~\Cref{thm:main}.

Finally, we show that partially-rotated matrices satisfy RNO with high probability.

\begin{lemma}[Partially-rotated matrices satisfy RNO]
\torestate
{
\label{thm:rot-sep-rno}
Let $s \in \mathbb{N}$ and $S \subseteq [d]$.
Suppose a matrix $\bm{X} \in \R^{n \times d}$ is $(\epsilon, \delta)$-partially-rotated with respect to $S$ for some $\delta > 0$.
Then there exists some constant $C > 0$ such that, if $n \geq C \delta^{-1} s \max(\log d, \log(1/\epsilon))$, then $\bm{X}$ satisfies $(s, S, 2\epsilon)$-RNO with probability at least $1-e^{-\Omega(\delta n)}$.
}
\end{lemma}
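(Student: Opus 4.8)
The plan is to upgrade the pointwise guarantee of \Cref{def:rot-sep} (fixed $\alpha$, fixed $\beta$) to one uniform over all $s$-sparse $\alpha,\beta$ by a covering argument, then union bound. \textbf{Reduction to a fixed support pair.} For supports $T_1\subseteq S$ and $T_2\subseteq S^c$ with $\abs{T_1},\abs{T_2}\le s$, the part of the $(s,S,2\epsilon)$-RNO condition coming from $\alpha$ supported in $T_1$ and $\beta$ supported in $T_2$ is the assertion that $\abs{\iprod{\widehat{\bm X_{T_1}\alpha},\widehat{\bm X_{T_2}\beta}}}\le 2\epsilon$ for all $\alpha\in\R^{T_1},\beta\in\R^{T_2}$, where $\widehat v:=v/\norm{v}_2$ (read as $0$ on a null argument); the full RNO condition is the conjunction of these over all $\le d^{2s}$ support pairs. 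Since $\iprod{\widehat{\bm X_{T_1}\alpha},\widehat{\bm X_{T_2}\beta}}$ depends on $\alpha$ only through the direction of $\bm X_{T_1}\alpha$, which sweeps out the unit sphere of $U_1:=\operatorname{colspan}(\bm X_{T_1})$ — a subspace of dimension $\le s$ — and likewise for $\beta$ and $U_2:=\operatorname{colspan}(\bm X_{T_2})$, the per-pair statement is exactly $\norm{P_{U_1}P_{U_2}}_{\op}\le 2\epsilon$.

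\textbf{Net argument.} Fix such a pair and take $\eta$-nets $\mathcal N_1,\mathcal N_2$ of the unit spheres of $U_1,U_2$, each of size at most $(3/\eta)^s$, with $\eta=\epsilon/3$. A standard net estimate gives $\norm{P_{U_1}P_{U_2}}_{\op}\le\max_{u'\in\mathcal N_1,\,v'\in\mathcal N_2}\abs{\iprod{u',v'}}+3\eta$, so it suffices to bound $\abs{\iprod{u',v'}}\le\epsilon$ at every net pair. Writing $u'=\bm X_S\alpha'$ with $\alpha'$ supported on $T_1$ (possible because $U_1\subseteq\operatorname{colspan}(\bm X_S)$) and $v'=\bm X_{S^c}\beta'$, this is $\abs{\iprod{\widehat{\bm X_S\alpha'},\widehat{\bm X_{S^c}\beta'}}}$, which \Cref{def:rot-sep} bounds by $\epsilon$ outside an event of probability $e^{-\delta n}$. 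Union bounding over the $\le d^{2s}$ support pairs and, inside each, over the $\le(9/\epsilon)^{2s}$ net pairs, the total failure probability is at most $d^{2s}(9/\epsilon)^{2s}e^{-\delta n}$, which is $e^{-\Omega(\delta n)}$ as soon as $n\ge C\delta^{-1}\paren{s\log d+s\log(1/\epsilon)}=C\delta^{-1}s\max(\log d,\log(1/\epsilon))$ for a suitable constant $C$ — exactly the claimed threshold.

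\textbf{The main obstacle.} The one non-rigorous point above is that $\mathcal N_1,\mathcal N_2$ live inside the \emph{random} subspaces $U_1,U_2$, so the coefficient vectors $\alpha',\beta'$ representing the net directions depend on $\bm X$, whereas \Cref{def:rot-sep} controls a \emph{fixed} pair $(\alpha',\beta')$; thus the union bound as stated is not legitimate. It does not help to net the fixed coefficient spheres of $\R^{T_1},\R^{T_2}$ instead: if $\bm X_{T_1}$ is ill-conditioned then $\alpha\mapsto\widehat{\bm X_{T_1}\alpha}$ is wildly sensitive near $\ker\bm X_{T_1}$ (the direction along a tiny singular value of a nearly rank-deficient $\bm X_{T_1}$ is produced only by $\alpha$'s at which the normalization $v\mapsto v/\norm v$ blows up), so the normalized images of any fixed finite coefficient net fail to $O(\epsilon)$-cover the sphere of $U_1$ and the net estimate collapses there. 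Making the argument honest therefore requires exhibiting a \emph{fixed} family of coefficient vectors whose normalized images do cover, to accuracy $\epsilon$, every column-span direction that can actually occur. The available leverage is that \Cref{def:rot-sep} quantifies over \emph{all} $\alpha\in\R^{\abs S}$ and $\beta\in\R^{\abs{S^c}}$ (not only $s$-sparse or unit ones): one can work with a multiscale net of the whole coefficient ball, truncated at exponentially large radius — which costs only a $\poly(n)$ factor in the union bound, harmless given the $n\gtrsim\delta^{-1}s\log d$ budget — and dispose of the residual genuinely degenerate configurations by a deterministic linear-algebra argument. Executing this is the heart of the proof.
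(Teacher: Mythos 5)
Your first two paragraphs reproduce, in condensed form, the argument in the paper: fix a pair of supports $S_\alpha\subseteq S$, $S_\beta\subseteq S^c$; introduce orthonormal bases $\bm{\Phi}_\alpha,\bm{\Phi}_\beta\in\R^{n\times s}$ of the two column spans; take an $\epsilon$-net; apply the pointwise partial-rotation bound at each net pair; and union bound over nets and supports. The only cosmetic difference is that the paper nets the coefficient sphere in $\R^s$ and pushes it through the isometries $\bm{\Phi}_\alpha,\bm{\Phi}_\beta$, whereas you net the unit spheres of the column spans directly — these are the same net. The bookkeeping ($d^{2s}$ support pairs, $(O(1/\epsilon))^{2s}$ net pairs, each failing with probability $e^{-\delta n}$) matches and gives the stated threshold $n\gtrsim\delta^{-1}s\max(\log d,\log(1/\epsilon))$.

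The concern you raise in your third paragraph is real, and you should know that it is also present in the paper's own proof. The paper asserts that ``by $(\epsilon,\delta)$-partial rotation, for each $v_\alpha$ and $v_\beta$, $\mathbb{P}(|\iprod{\bm{\Phi}_\alpha v_\alpha,\bm{\Phi}_\beta v_\beta}|>\epsilon)\le e^{-\delta n}$.'' This is precisely the step you flag: $\bm{\Phi}_\alpha,\bm{\Phi}_\beta$ are bases of $\bm{X}$-dependent subspaces, so the coefficient vector $\alpha'$ with $\bm{\Phi}_\alpha v_\alpha=\bm{X}_S\alpha'/\norm{\bm{X}_S\alpha'}_2$ depends on $\bm{X}$, and \cref{def:rot-sep} only controls deterministic $(\alpha',\beta')$; the paper gives no further justification. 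The step is legitimate whenever the Gram matrices $\bm{X}_{S_\alpha}^\top\bm{X}_{S_\alpha}$ and $\bm{X}_{S_\beta}^\top\bm{X}_{S_\beta}$ are almost surely deterministic (then $\bm{\Phi}_\alpha,\bm{\Phi}_\beta$ can be chosen so that the correspondence $v\leftrightarrow\alpha'$ is deterministic), which covers the orthogonal-rotation construction of \cref{lem:rot-to-partial} where the column spans are unitary images of fixed subspaces; but it does not obviously cover the abstract definition, nor the sub-Gaussian construction of \cref{lem:subgauss-to-partial}, where $\bm{X}_{S_\beta}^\top\bm{X}_{S_\beta}=X_{S_\beta}^\top\bm{R}^\top\bm{R}X_{S_\beta}$ is genuinely random.

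So: your main argument is the paper's, and the gap you spotted is a gap in the paper as well. Your proposed repair (a multiscale net of the coefficient ball plus a linear-algebra cleanup of degenerate directions) is sketched but not executed, and it is heavier than needed: the radius at which one may truncate is governed by the condition number of $\bm{X}_{S_\alpha}$, which has no a priori polynomial bound, and the ``deterministic linear-algebra argument'' for the residual is the whole difficulty. A cleaner resolution is to strengthen the hypothesis to a subspace form of partial rotation (the tail bound should hold for any measurable selection of unit vectors from the two column spans, not just normalized images of fixed coefficients), and then verify that strengthened form directly for each concrete construction — trivially for orthogonal $\bm{R}$, and via spectral concentration of $\bm{R}$ restricted to the fixed subspace $\operatorname{span}(X_{S_\beta})$ in the sub-Gaussian case. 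As written, you have correctly reproduced the paper's argument and correctly diagnosed a genuine imprecision in it; your fix, though, is not a complete proof.
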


\paragraph{RNO and other conditions}
Let us briefly comment on the relation between RNO and other deterministic conditions that appear in the context of Lasso.
First, we prove in~\Cref{sec:rip-to-rno} that RNO is implied by the restricted isometry property (RIP) of~\cite{MR2243152-Candes05}.
However, RIP is a much stronger property: it requires that \emph{all} sparse combinations of columns in $X$ are well-behaved, which may not be true with RNO.

Second, we note that RNO is similar to the restricted orthogonality condition of~\cite{MR2243152-Candes05}, the main difference being that the latter considers inner products $\Abs{\Iprod{ \frac{X_S \alpha}{\Norm{\alpha}_2}, \frac{X_{S^c} \beta}{\Norm{\beta}_2} }}$.
However, this difference in normalization makes the latter depend on the correlations among columns of $X_{S^c}$: the magnitude of $X_{S^c} \beta / \Norm{\beta}_2$ can be larger by a factor of $\sqrt{s}$ when $X_{S^c}$ has identical columns than when it has random columns.
For our application to partially-rotated matrices we need to tolerate arbitrary correlations among the columns of $X_{S^c}$, and our guarantees would be weaker with an additional factor of $\sqrt{s}$.
The normalization in our definition avoids these issues.

\subsection{Discussion: restricted eigenvalue constant}
\label{sec:re}
Our definition of the RE constant in~\Cref{def:re} matches that of~\cite{MR2576316-Geer09}.
It is also common, however, to define the RE constant with $\norm{\beta}_2^2$ in the denominator instead of $\norm{\beta_S}_2^2$~\cite{MR2533469-Bickel09, MR3025133-Negahban12, MR3967104-Wainwright19}.
Specifically, the alternative version, which we denote by $\gamma'_S(X)$, is
\begin{equation}
\label{eq:alt-re}
\gamma'_{S}(X) = \min_{\beta \in \mathbb{C}(S)}\frac{\frac{1}{n}\Norm{X\beta}_2^2}{\Norm{\beta}_2^2}\,.
\end{equation}

Because $\norm{\beta}_2^2 \geq \norm{\beta_S}_2^2$, we have $\gamma'_S(X) \leq \gamma_S(X)$ for all $X$ and $S$.
It is in fact possible that $\gamma_S(X)$ is bounded away from zero but $\gamma'_S(X)$ is not.
It turns out that our guarantees in~\Cref{thm:main} and~\Cref{thm:rno-to-re} do not hold for $\gamma'_S(X)$: see~\Cref{sec:strong-re} for a counterexample.

Many of the known guarantees of Lasso hold with our definition~\cite{MR2576316-Geer09}: the prediction error bound in~\Cref{eq:one}, and the parameter error bounds
\begin{equation}
\label{eq:error-l1}
\norm{\hat\beta-\beta}_1 \leq O(\sigma k \sqrt{\log d} / \gamma_{\supp(\beta)}(X) \sqrt{n})
\end{equation}
and 
\begin{equation}
\label{eq:error-l2}
\norm{\hat\beta_{\supp(\beta)} - \beta}_2^2 \leq O(\sigma^2 k \log d / (\gamma_{\supp(\beta)}(X))^2 n)\,.
\end{equation}
However, the parameter error guarantee~\cite{MR3025133-Negahban12, MR3967104-Wainwright19}
\[
\norm{\hat\beta - \beta}_2^2 \leq O(\sigma^2 k \log d / (\gamma'_{\supp(\beta)}(X))^2 n)
\]
does not hold with $(\gamma_{\supp(\beta)}(X))^2$, so in our setting we do not achieve this last guarantee.

\subsection{Related work}

Other works that have studied sparse linear regression in settings with correlated features include~\cite{elasticnet05, huang2011variable, BUHLMANN20131835, NIPS20136e2713a6, pmlr-v51-figueiredo16, 8646615, kelner2022power, kelner2024feature, kelner2024lasso}.
In their settings the features selected by the secret may be correlated among themselves, and therefore the RE constant may not be bounded away from zero.
Then, these papers study Lasso \emph{after} a preprocessing step, or else algorithms different from Lasso.

Another related line of work has studied robust versions of sparse linear regression~\cite{karmalkar2018compressed, dalalyan2019outlier, liu2020high, d2021consistent2, kelner2023semi}.
In these papers, an adversary is allowed to corrupt a fraction of the samples in the input, and the question is whether polynomial-time algorithms can still recover a good estimate.
The closest in spirit to our setting is~\cite{kelner2023semi}, which considers a semirandom design in which only a subset of the rows is well-conditioned.
Nevertheless, these results are incomparable with ours, because in our setting we have ill-behaved design \emph{columns}, not rows (samples).

\subsection{Organization}
In~\Cref{sec:tech} we give a technical overview of our proofs.
In~\Cref{sec:sparse} we prove a sparsification result used in the proof of~\Cref{thm:rno-to-re}, and then in \Cref{sec:rno-to-re} we prove~\Cref{thm:rno-to-re} and in~\Cref{sec:rot-sep-rno} we prove~\Cref{thm:rot-sep-rno}.
Finally, \Cref{sec:main-results} proves \Cref{thm:main}.

\section{Techniques}
\label{sec:tech}

In this section we give an overview of our proofs. 
For simplicity, we focus on the special case where a matrix is partially-rotated with respect to a set of cardinality $k$.
Specifically, let $\bm{X} \in \R^{n \times d}$ have columns of norm $\sqrt{n}$ and be partially-rotated with respect to a set $S \subseteq [d]$ with $|S|=k$.

Suppose we were trying to prove~\Cref{thm:main} directly.
The critical fact to prove turns out to be that, for $n$ large enough, with high probability $\Norm{\bm{X}_S \alpha + \bm{X}_{S^c} \beta}^2 \geq 0.99 \Norm{\bm{X}_S \alpha}^2$ for all $\alpha \in \R^{k}$ and $\beta \in \R^{|S^c|}$ with $\norm{\beta}_1 \leq \norm{\alpha}_1$.
That is, linear combinations of columns in $\bm{X}_S$ cannot be canceled by appropriate linear combinations of columns in $\bm{X}_{S^c}$.
Quantitatively:

\begin{fact}
\label[fact]{fact:ftech}
Suppose $\Norm{\bm{X}_S \alpha}_2 \geq \sqrt{\gamma n} \Norm{\alpha}_2$ for all $\alpha \in \R^k$.
If $n \geq C k \log d / \gamma$ for some absolute constant $C > 0$, then with high probability, for all $\alpha \in \R^{k}$ and $\beta \in \R^{|S^c|}$ with $\norm{\beta}_1 \leq \norm{\alpha}_1$,
\begin{equation}
\label{eq:fact}
\Norm{\bm{X}_S \alpha + \bm{X}_{S^c} \beta}_2^2 \geq 0.99 \Norm{\bm{X}_S \alpha}_2^2\,.
\end{equation}
\end{fact}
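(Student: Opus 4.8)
The plan is to derive \Cref{fact:ftech} by combining two ingredients established elsewhere in the paper: that partially-rotated matrices satisfy restricted normalized orthogonality (\Cref{thm:rot-sep-rno}), and a sparsification estimate of Maurey type (see \Cref{sec:sparse}) that replaces the dense vector $\beta$ by a sparse surrogate at small cost. Write $a = \Norm{\bm{X}_S\alpha}_2$ and $b = \Norm{\bm{X}_{S^c}\beta}_2$; expanding the square, \cref{eq:fact} is equivalent to $2\,|\iprod{\bm{X}_S\alpha,\ \bm{X}_{S^c}\beta}| \le 0.01\,a^2 + b^2$, so everything reduces to controlling the cross term.

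First I would fix a sparsity parameter $s$ of order $cn/\log d$ --- precisely, the largest $s$ with $n \ge C' c^{-1} s \max(\log d, \log 100)$, where $C'$ is the absolute constant from \Cref{thm:rot-sep-rno} --- and use that $\bm{X}$ is $(0.01, c)$-partially-rotated to conclude, via \Cref{thm:rot-sep-rno}, that with probability $1 - e^{-\Omega(cn)}$ the matrix $\bm{X}$ satisfies $(s, S, 0.02)$-RNO. For a suitably large absolute constant $C$, the hypothesis $n \ge C k \log d / \gamma$ ensures both that $s \ge k$ --- so every $\alpha \in \R^k$ is $s$-sparse, using $|S| = k$ --- and that $\theta := \sqrt{k/(\gamma s)}$ is as small as we like (say $\theta \le 0.004$); here I use $\gamma \le 1$, which holds since the columns of $\bm{X}_S$ have norm $\sqrt n$. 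Condition on the RNO event for the rest of the argument.

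Next, fix $\alpha \in \R^k$ and $\beta$ with $\norm{\beta}_1 \le \norm{\alpha}_1$ (and $\bm{X}_S\alpha \ne 0$, else there is nothing to prove). Writing $\beta$ as $\norm{\beta}_1$ times a convex combination of the signed basis vectors $\mathrm{sign}(\beta_j)e_j$ and averaging $s$ independent draws, one obtains an $s$-sparse $\beta'$ with $\Norm{\bm{X}_{S^c}\beta' - \bm{X}_{S^c}\beta}_2^2 \le \norm{\beta}_1^2\,n/s$, since every column has norm $\sqrt n$. Because $\norm{\beta}_1 \le \norm{\alpha}_1 \le \sqrt k\,\norm{\alpha}_2 \le \sqrt{k/(\gamma n)}\cdot a$ by the hypothesis $\Norm{\bm{X}_S\alpha}_2 \ge \sqrt{\gamma n}\,\norm{\alpha}_2$, the approximation error is at most $\theta a$. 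Now split $\iprod{\bm{X}_S\alpha, \bm{X}_{S^c}\beta} = \iprod{\bm{X}_S\alpha, \bm{X}_{S^c}\beta'} + \iprod{\bm{X}_S\alpha, \bm{X}_{S^c}(\beta - \beta')}$: the first term is at most $0.02\,a\,(b + \theta a)$ by $(s,S,0.02)$-RNO applied to the $s$-sparse pair $(\alpha, \beta')$, and the second is at most $\theta a^2$ by Cauchy--Schwarz. Hence $|\iprod{\bm{X}_S\alpha, \bm{X}_{S^c}\beta}| \le 0.02\,ab + 1.02\,\theta\,a^2$, and plugging into $\Norm{\bm{X}_S\alpha + \bm{X}_{S^c}\beta}_2^2 = a^2 + 2\iprod{\bm{X}_S\alpha, \bm{X}_{S^c}\beta} + b^2$ and completing the square ($b^2 - 0.04\,ab \ge -0.0004\,a^2$) yields
\[
  \Norm{\bm{X}_S\alpha + \bm{X}_{S^c}\beta}_2^2 \;\ge\; (1 - 2.04\theta)\,a^2 + \bigl(b^2 - 0.04\,ab\bigr) \;\ge\; (1 - 2.04\theta - 0.0004)\,a^2 \;\ge\; 0.99\,a^2 ,
\]
which is \cref{eq:fact}.

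I expect the sparsification to be the crux. The partial-rotation/RNO hypothesis only controls \emph{sparse} combinations of columns of $\bm{X}_{S^c}$ --- its normalization is by $\Norm{\bm{X}_{S^c}\beta}_2$ rather than $\norm{\beta}_1$, so a naive net over the $\ell_1$-ball of dense $\beta$'s would be far too large --- so one is forced to pass to a sparse surrogate with error small relative to $a$. This imposes a two-sided constraint on $s$: it must be small enough ($s \lesssim cn/\log d$) for RNO to hold, yet large enough ($s \gtrsim k/\gamma$) to beat the $\sqrt k$ loss coming from $\norm{\beta}_1 \le \sqrt k\,\norm{\alpha}_2$ together with the restricted-eigenvalue normalization --- and these two requirements are simultaneously satisfiable precisely because of the assumed scaling $n \gtrsim k\log d/\gamma$. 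Once $\theta$ is made small, pinning the constant at $0.99$ is routine.
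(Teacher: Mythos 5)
Your proof is correct and follows the same route as the paper's sketch in \Cref{sec:tech}: invoke \Cref{thm:rot-sep-rno} (partial rotation implies RNO) to control cross inner products of sparse combinations, sparsify $\beta$ via the Maurey/Carath\'eodory argument of \Cref{cor:caratheodory} at sparsity level $s \asymp k/\gamma$, and combine using $\norm{\beta}_1 \le \sqrt{k}\,\norm{\alpha}_2 \le \sqrt{k/(\gamma n)}\Norm{\bm{X}_S\alpha}_2$. The only cosmetic difference is that you split the cross term $\iprod{\bm{X}_S\alpha,\bm{X}_{S^c}\beta}$ into an RNO-controlled piece and a Cauchy--Schwarz piece and then complete the square in $b$, whereas the paper applies the triangle inequality to $\Norm{\bm{X}_S\alpha+\bm{X}_{S^c}\beta'}_2$ minus the approximation error --- these are the same estimate.
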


We note that, to prove~\Cref{fact:ftech}, it would suffice to prove that for all $\alpha$ and $\beta$
\begin{equation}
\label{eq:ip}
\Abs{\Iprod{\frac{\bm{X}_S \alpha}{\Norm{\bm{X}_S\alpha}_2}, \frac{\bm{X}_{S^c}\beta}{\Norm{\bm{X}_{S^c}\beta}_2}}} \leq 0.01\,.
\end{equation}
However, this inequality does not hold: in particular, if the columns of $\bm{X}_{S^c}$ span all of $\R^n$, then it is always possible to choose $\beta$ such that the inner product is equal to $1$.

\paragraph{Attempt via \Holder's inequality}
Because we have constraints in terms of the $\ell_1$-norms of $\alpha$ and $\beta$, a natural approach is to apply \Holder's inequality as
\[\Abs{\Iprod{\frac{\bm{X}_S \alpha}{\Norm{\bm{X}_S\alpha}_2}, \frac{\bm{X}_{S^c}\beta}{\Norm{\bm{X}_{S^c}\beta}_2}}} = \Abs{\Iprod{\frac{\bm{X}_{S^c}^\top \bm{X}_S \alpha}{\Norm{\bm{X}_S \alpha}_2}, \frac{\beta}{\Norm{\bm{X}_{S^c}\beta}_2}}} \leq \frac{\Norm{\bm{X}_{S^c}^\top \bm{X}_S \alpha}_\infty}{\Norm{\bm{X}_S \alpha}_2} \cdot \frac{\Norm{\beta}_1}{\Norm{\bm{X}_{S^c} \beta}_2}\,.\]
Using the definition of partial rotation, we can prove by standard concentration bounds that with high probability $\Norm{\bm{X}_{S^c}^\top \bm{X}_S \alpha}_\infty \leq O(\sqrt{k \log d}) \norm{\bm{X}_S \alpha}_2$ for all $\alpha \in \R^{k}$.
For the other terms, we have $\norm{\beta}_1 \leq \norm{\alpha}_1 \leq \sqrt{k} \norm{\alpha}_2$ and, because~\Cref{fact:ftech} is trivially true when $\Norm{\bm{X}_{S^c}\beta}_2 \ll \Norm{\bm{X}_S \alpha}_2$, also $\Norm{\bm{X}_{S^c}\beta}_2 \geq \Omega\Paren{\Norm{\bm{X}_S\alpha}_2} \geq \Omega(\sqrt{\gamma n} \norm{\alpha}_2)$.
Combining these, we get with high probability
\[\Abs{\Iprod{\frac{\bm{X}_S \alpha}{\Norm{\bm{X}_S\alpha}_2}, \frac{\bm{X}_{S^c}\beta}{\Norm{\bm{X}_{S^c}\beta}_2}}} \leq O\Paren{\sqrt{\frac{k^2 \log d}{\gamma n}}} \,,\]
which is $\leq 0.01$ when $n \geq C k^2 \log d / \gamma$.
Recall that we aim for a linear dependence on $k$, so we need to do better.

\paragraph{Result for sparse linear combinations}
An important observation is that, if $n \geq C s \log d$, then~\Cref{eq:ip} is true with high probability for all $s$-sparse $\alpha$ and $\beta$.
In other words, $\bm{X}$ satisfies $(s, S, 0.01)$-RNO with high probability.
This statement is our~\Cref{thm:rot-sep-rno}.
The proof involves an $\epsilon$-net argument combined with the definition of partial rotation.

Hence, because we are interested in the case $n \geq C k \log d / \gamma$, the conclusion of~\Cref{fact:ftech} is true for all $O(k/\gamma)$-sparse $\alpha$ and $\beta$.
Recall that in this section we consider the special case $|S|=k$, so all $\alpha$ are $k$-sparse and hence also $O(k/\gamma)$-sparse.
However, $\beta \in \R^{|S^c|}$ may not be sparse.
How can we then use this result?

\paragraph{Sparsification}
Suppose we had an $O(k/\gamma)$-sparse approximation $\beta'$ of $\beta$.
Then, using the triangle inequality, we could write
\begin{equation}
\label{eq:sparse-eq}
\Norm{\bm{X}_S \alpha + \bm{X}_{S^c}\beta}_2 \geq \Norm{\bm{X}_S \alpha + \bm{X}_{S^c}\beta'}_2 - \Norm{\bm{X}_{S^c}\beta' - \bm{X}_{S^c} \beta}_2\,.
\end{equation}
Because $\beta'$ is $O(k/\gamma)$-sparse, we can apply the conclusion of~\Cref{fact:ftech} to the first term on the right-hand side and lower bound it by $\Omega(\Norm{\bm{X}_S \alpha}_2)$.
But how to bound the term $\Norm{\bm{X}_{S^c} \beta' - \bm{X}_{S^c}\beta}_2$?
In other words, how to control the quality of the approximation?

It turns out that an effective approximation follows from an approximate Carath\'eodory's theorem obtained by subsampling the coordinates of $\beta$ (see Theorem 0.0.2 in~\cite{Vershynin2018}).
Given the approximate Carath\'eodory's theorem, we easily derive the following approximation lemma (similar sparsification results have been observed before in the statistics literature; e.g., see Lemma 5.1 in~\cite{MR3568047-Oliveira16} and Lemma 2.7 in~\cite{MR3612870-Lecue17}):

\begin{lemma}[Sparsification with $\ell_1$-norm error]
\torestate
{
\label{cor:caratheodory}
Let $X \in \R^{n \times d}$. For each $\beta \in \R^d$ and $1 \leq s \leq d$, there exists some $s$-sparse $\beta' \in \R^d$ such that 
\[\Norm{X\beta - X\beta'}_2 \leq \frac{2 \max\{\Norm{X_1}_2, \ldots, \Norm{X_d}_2\}}{\sqrt{s}} \norm{\beta}_1\,.\]
}
\end{lemma}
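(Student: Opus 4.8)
The plan is to deduce this directly from the approximate Carath\'eodory theorem (Theorem~0.0.2 in~\cite{Vershynin2018}), which states that if a point $x$ in a Euclidean space lies in the convex hull of a set $T$ with $\sup_{t \in T} \Norm{t}_2 \leq R$, then for every integer $s \geq 1$ there exist points $t_1, \ldots, t_m \in T$ with $m \leq s$ and a convex combination $u$ of $t_1, \ldots, t_m$ satisfying $\Norm{x - u}_2 \leq R/\sqrt{s}$. I would apply this with $x = X\beta$ and with $T$ equal to the set of suitably signed and rescaled columns of $X$, so that the $m$ columns appearing in $u$ are exactly the support of the desired $\beta'$.

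Concretely, first dispose of the trivial case $\beta = 0$ by taking $\beta' = 0$. Otherwise set $p_i = \abs{\beta_i} / \norm{\beta}_1$ for $i \in [d]$, so that $(p_i)_{i \in [d]}$ is a probability vector, and set $v_i = \norm{\beta}_1 \cdot \operatorname{sign}(\beta_i) \cdot X_i \in \R^n$, with the convention $\operatorname{sign}(0) = 0$. Then
\[
X\beta \;=\; \sum_{i \in [d]} \beta_i X_i \;=\; \sum_{i \in [d]} p_i\, v_i\,,
\]
so $X\beta$ lies in the convex hull of $T := \{v_1, \ldots, v_d\}$, and every element of $T$ has Euclidean norm at most $R := \norm{\beta}_1 \cdot \max_{i \in [d]} \Norm{X_i}_2$.

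Applying the approximate Carath\'eodory theorem to $x = X\beta$ and this $T$, I obtain indices $i_1, \ldots, i_m$ with $m \leq s$ and nonnegative weights $q_1, \ldots, q_m$ summing to $1$ such that $u := \sum_{\ell=1}^m q_\ell v_{i_\ell}$ satisfies $\Norm{X\beta - u}_2 \leq R/\sqrt{s}$. Defining $\beta' \in \R^d$ by $\beta'_i = \norm{\beta}_1 \cdot \operatorname{sign}(\beta_i) \cdot \sum_{\ell \,:\, i_\ell = i} q_\ell$, one checks that $\beta'$ is supported on $\{i_1, \ldots, i_m\}$, hence is $s$-sparse, and that $X\beta' = \sum_{\ell=1}^m q_\ell v_{i_\ell} = u$. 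Therefore
\[
\Norm{X\beta - X\beta'}_2 \;=\; \Norm{X\beta - u}_2 \;\leq\; \frac{R}{\sqrt{s}} \;=\; \frac{\norm{\beta}_1 \max_i \Norm{X_i}_2}{\sqrt{s}} \;\leq\; \frac{2 \max\{\Norm{X_1}_2, \ldots, \Norm{X_d}_2\}}{\sqrt{s}}\,\norm{\beta}_1\,.
\]
There is essentially no obstacle in this argument: the quantitative content is entirely carried by the cited approximate Carath\'eodory bound, whose proof is the standard device of drawing $s$ i.i.d.\ terms of the convex combination $\sum_i p_i v_i$ and bounding the expected squared error of the empirical mean by $R^2/s$. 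The only points requiring a little care are the sign bookkeeping — needed so that $X\beta'$ really equals the convex combination $u$ while $\beta'$ remains $s$-sparse — and the degenerate case $\beta = 0$.
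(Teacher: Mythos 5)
Your proof is correct and takes essentially the same approach as the paper: apply Maurey's approximate Carath\'eodory bound with $T$ built from the (signed, scaled) columns of $X$, so that $X\beta$ lies in $\operatorname{conv}(T)$ and the resulting sparse convex combination can be read off as $X\beta'$. The only cosmetic difference is that the paper takes $T=\{\pm X_i/(2D)\}$ and uses the diameter-$\le 1$ form of Vershynin's Theorem~0.0.2, whereas you absorb the signs into $v_i=\|\beta\|_1\operatorname{sign}(\beta_i)X_i$ and invoke a max-norm variant, which saves a factor of $2$ that you then discard to match the stated bound.
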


Hence, by choosing $s=O(k/\gamma)$ large enough in~\Cref{cor:caratheodory}, there exists some $O(k/\gamma)$-sparse $\beta'$ such that $\Norm{\bm{X}_{S^c} \beta' - \bm{X}_{S^c} \beta}_2 \leq c \sqrt{\gamma n/k} \norm{\beta}_1$ for some small enough $c > 0$.
Plugging this bound back into~\Cref{eq:sparse-eq}, and using that $\norm{\beta}_1 \leq \norm{\alpha}_1 \leq \sqrt{k} \norm{\alpha}_2$ and $\norm{\alpha}_2 \leq \frac{1}{\sqrt{\gamma n}} \norm{\bm{X}_S \alpha}_2$, gives the desired result.
This part of the argument, properly generalized, is our~\Cref{thm:rno-to-re}.
\section{Sparsification with $\ell_1$-norm error}
\label{sec:sparse}

We first state Theorem 0.0.2 of~\cite{Vershynin2018}, which gives an approximate Carath\'eodory's theorem.
The proof of this result is by subsampling: it shows that, given a set $T$ and a point $x \in \operatorname{conv}(T)$, if we sample $s$ points from $T$ with probability proportional to their weight in the convex combination $x$, then the expected distance of $x$ from their average is bounded by $\operatorname{diam}(T)/\sqrt{s}$.
This argument is often referred to as Maurey's empirical method~\cite{pisier1981remarques}.

\begin{fact}[See Theorem 0.0.2 in~\cite{Vershynin2018}]
\label[fact]{fact:caratheodory}
Consider a set $T \subset \mathbb{R}^n$ whose diameter is bounded by $1$.
Then, for every point $x \in \operatorname{conv}(T)$ and every integer $s$, one can find points $x_1, \ldots, x_s \in T$ such that
\[\Norm{x - \frac{1}{s} \sum_{i=1}^s x_i}_2 \leq \frac{1}{\sqrt{s}}\,.\]
\end{fact}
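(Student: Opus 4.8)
The plan is to prove \Cref{fact:caratheodory} by the probabilistic argument known as Maurey's empirical method. First I would fix a convex representation of $x$: by the definition of $\operatorname{conv}(T)$ as the set of finite convex combinations, there are points $z_1, \ldots, z_m \in T$ and weights $\lambda_1, \ldots, \lambda_m \geq 0$ with $\sum_{j=1}^m \lambda_j = 1$ and $x = \sum_{j=1}^m \lambda_j z_j$. This lets me introduce a random vector $\bm{Z}$ taking the value $z_j$ with probability $\lambda_j$, so that $\E \bm{Z} = x$ and $\bm{Z} \in T$ almost surely. The strategy is to draw $s$ independent copies of $\bm{Z}$, show their empirical average is close to $x$ in expectation, and then invoke the probabilistic method to extract a good realization.

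Concretely, let $\bm{Z}_1, \ldots, \bm{Z}_s$ be i.i.d.\ copies of $\bm{Z}$. Since the vectors $\bm{Z}_i - x$ are independent and mean zero, all cross terms in the square of the sum vanish, so
\[
\E\Norm{x - \frac{1}{s}\sum_{i=1}^s \bm{Z}_i}_2^2 = \frac{1}{s^2}\,\E\Norm{\sum_{i=1}^s (\bm{Z}_i - x)}_2^2 = \frac{1}{s^2}\sum_{i=1}^s \E\Norm{\bm{Z}_i - x}_2^2 = \frac{1}{s}\,\E\Norm{\bm{Z} - x}_2^2\,.
\]
It then remains to bound the single-sample term $\E\Norm{\bm{Z}-x}_2^2$ by the diameter. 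The cleanest way is to note that $x \in \operatorname{conv}(T)$ and $\bm{Z} \in T$, so $\Norm{\bm{Z} - x}_2 \leq \operatorname{diam}(\operatorname{conv}(T)) = \operatorname{diam}(T) \leq 1$ surely, hence $\E\Norm{\bm{Z}-x}_2^2 \leq 1$. (Alternatively, introducing an independent copy $\bm{Z}'$ of $\bm{Z}$ gives $\E\Norm{\bm{Z}-x}_2^2 = \tfrac12\E\Norm{\bm{Z}-\bm{Z}'}_2^2 \leq \tfrac12$, which even improves the constant, but this is not needed.) Combining the two displays, $\E\Norm{x - \frac1s\sum_{i=1}^s \bm{Z}_i}_2^2 \leq \frac1s$, so by the probabilistic method there is at least one realization $x_1, \ldots, x_s$ of $\bm{Z}_1, \ldots, \bm{Z}_s$ — each $x_i$ being one of the $z_j$, hence in $T$ — with $\Norm{x - \frac1s\sum_{i=1}^s x_i}_2^2 \leq \frac1s$, i.e.\ $\Norm{x - \frac1s\sum_{i=1}^s x_i}_2 \leq 1/\sqrt{s}$.

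There is no real obstacle here; the argument is a one-line second-moment computation plus a trivial diameter bound. The only point deserving a word of care is the very first step — writing $x$ as a genuine finite convex combination of elements of $T$ so that $\bm{Z}$ is a well-defined random variable — which is immediate from the definition of $\operatorname{conv}(T)$ (no appeal to the exact Carath\'eodory theorem is needed, since we do not care how many vertices the representation uses). Finally, for the use of this fact in the proof of \Cref{cor:caratheodory}, I would apply it to the point set obtained from the columns $X_1,\ldots,X_d$ and $-X_1,\ldots,-X_d$ after rescaling by its diameter bound $2\max_j\Norm{X_j}_2$, which converts the error $1/\sqrt s$ into the claimed $2\max_j\Norm{X_j}_2 \cdot \norm{\beta}_1/\sqrt s$.
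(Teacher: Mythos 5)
Your proof is correct and is exactly the subsampling / Maurey's empirical method argument that the paper sketches in the sentence preceding \cref{fact:caratheodory} (the paper itself defers the full proof to Vershynin's book). The second-moment computation, the diameter bound via $\operatorname{diam}(\operatorname{conv}(T))=\operatorname{diam}(T)$, and the probabilistic-method extraction are all standard and carried out cleanly.
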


Based on this result, we derive a sparsification result for our setting with error depending on the vector's $\ell_1$-norm.
We note that similar sparsification results have been observed before in the statistics literature; e.g., see Lemma 5.1 in~\cite{MR3568047-Oliveira16} and Lemma 2.7 in~\cite{MR3612870-Lecue17}.

\restatelemma{cor:caratheodory}
\begin{proof}
Let $D = \max\{\Norm{X_1}_2, \ldots, \Norm{X_d}_2\}$ be the maximum norm of a column of $X$.
Then let $T = \{X_1 / 2D, \ldots, X_d / 2D, -X_1 / 2D, \ldots, -X_d / 2D\}$ be the set of columns of $X$ and their negations divided by $2D$.
The diameter of $T$ is then bounded by $1$.

Note that 
\[\frac{X\beta}{2D \norm{\beta}_1} = \sum_{i=1}^d \frac{\beta_i}{2D \sum_{j=1}^d |\beta_j|} X_i = \sum_{i=1}^d \frac{|\beta_i|}{\sum_{j=1}^d |\beta_j|} (\operatorname{sign}(\beta_i) X_i / 2D)\]
is a convex combination of points in $T$. Then, by~\Cref{fact:caratheodory}, there exist $x_1, \ldots, x_s \in T$ such that $\frac{X\beta}{2D\norm{\beta}_1}$ has distance at most $1/\sqrt{s}$ from $\frac{1}{s} \sum_{i=1}^s x_i$. 
Observe that $\frac{1}{s} \sum_{i=1}^s x_i$ can be written as $X \beta'$ for some $s$-sparse $\beta' \in \R^d$.
Thus
\[\Norm{\frac{X\beta}{2D \norm{\beta}_1} - X\beta'}_2 \leq \frac{1}{\sqrt{s}}\,.\]
Multiplying both sides by $2D \norm{\beta}_1$ gives the desired conclusion.
\end{proof}

\section{Proof that RNO implies RE}
\label{sec:rno-to-re}

\restateprop{thm:rno-to-re}
\begin{proof}
Consider some $z \in \R^d$ with $z \in \mathbb{C}(S')$, and let $\alpha \in \R^{|S|}$ and $\beta \in \R^{|S^c|}$ such that $z_S = \alpha$ and $z_{S^c} = \beta$.
We have $Xz = X_S \alpha + X_{S^c} \beta$.
Then, by~\Cref{cor:caratheodory}, there exist $s$-sparse $\alpha'$ and $\beta'$ with $\Norm{X_S \alpha' - X_S \alpha}_2 \leq 2\sqrt{\frac{n}{s}}\norm{\alpha}_1$ and $\Norm{X_{S^c} \beta' - X_{S^c} \beta}_2 \leq 2\sqrt{\frac{n}{s}}\norm{\beta}_1$.
Hence
\[\Norm{Xz}_2 \geq \Norm{X_S\alpha' + X_{S^c}\beta'}_2 - O\Paren{\sqrt{\frac{n}{s}}}(\Norm{\alpha}_1+\Norm{\beta}_1)\,.\]
If $X_S \alpha' = 0$ or $X_{S^c} \beta' = 0$, the next step will hold trivially, so assume that both are non-zero.
Then, using that $X$ satisfies $(s,S, \delta)$-RNO and that $\alpha'$ and $\beta'$ are $s$-sparse, we get that
\begin{align*}
\Norm{X_S\alpha' + X_{S^c}\beta'}_2^2
&\geq \Norm{X_s \alpha'}_2^2 + \Norm{X_{S^c}\beta'}_2^2 - 2 \Norm{X_S \alpha'}_2 \Norm{X_{S^c} \beta'}_2 \Abs{\Iprod{ \frac{X_S \alpha'}{\Norm{X_S \alpha'}_2}, \frac{X_{S^c} \beta'}{\Norm{X_{S^c} \beta'}_2} }}\\
&\geq \Norm{X_s \alpha'}_2^2 + \Norm{X_{S^c}\beta'}_2^2 - 2 \delta \Norm{X_S \alpha'}_2 \Norm{X_{S^c} \beta'}_2\\
&= (1-\delta) \Norm{X_s \alpha'}_2^2 + (1-\delta) \Norm{X_{S^c}\beta'}_2^2 + \Paren{\sqrt{\delta}\Norm{X_s \alpha'}_2 - \sqrt{\delta}\Norm{X_{S^c}\beta'}_2}^2\\
&\geq (1-\delta) \Norm{X_s \alpha'}_2^2\,.
\end{align*}
Then, replacing $\Norm{X_S\alpha' + X_{S^c}\beta'}_2$ by $(1-\delta) \Norm{X_s \alpha'}_2$ in our lower bound on $\Norm{Xz}_2$, and using again that $\Norm{X_S \alpha' - X_S \alpha}_2 \leq \sqrt{\frac{2n}{s}}\norm{\alpha}_1$, we get that
\[\Norm{Xz}_2 \geq \sqrt{1-\delta} \Norm{X_S\alpha}_2 - O\Paren{\sqrt{\frac{n}{s}}}(\Norm{\alpha}_1+\Norm{\beta}_1)\,.\]
Let us use now that $z \in \mathbb{C}(S')$. This implies that $\norm{z_{(S')^c}}_1 \leq \norm{z_{S'}}_1$, and therefore, because $S' \subseteq S$, also that $\norm{\beta}_1 \leq \norm{\alpha_{S'}}_1 \leq \norm{\alpha}_1$. Hence
\[\Norm{Xz}_2 \geq \sqrt{1-\delta} \Norm{X_S\alpha}_2 - O\Paren{\sqrt{\frac{n}{s}}}\Norm{\alpha}_1\,.\]
The fact that $\norm{z_{(S')^c}}_1 \leq \norm{z_{S'}}_1$ also implies that $\norm{\alpha_{(S')^c}}_1 \leq \norm{\alpha_{S'}}_1$.
In particular, this implies that $\alpha \in \mathbb{C}(S')$, so $\Norm{X_S\alpha}_2 \geq \sqrt{n \gamma_{S'}(X_S)} \norm{\alpha_{S'}}_2$.
Finally, $\alpha \in \mathbb{C}(S')$ also implies that $\norm{\alpha}_1 \leq 2 \norm{\alpha_{S'}}_1 \leq 2 \sqrt{|S'|} \norm{\alpha_{S'}}_2$, so putting everything together
\begin{align*}
\Norm{Xz}_2
\geq \Paren{\sqrt{(1-\delta)n \gamma_{S'}(X_S)} - O\Paren{\sqrt{\frac{|S'| n}{s}}}} \norm{\alpha_{S'}}_2\,,
\end{align*}
and using that $\alpha_{S'} = z_{S'}$, squaring, and rearranging,
\begin{align*}
\frac{\frac{1}{n}\Norm{Xz}_2^2}{\norm{z_{S'}}_2^2}
&\geq \Paren{\sqrt{1-\delta} - O\Paren{\sqrt{\frac{|S'|}{\gamma_{S'}(X_S) \cdot s}}}}^2 \gamma_{S'}(X_S)\\
&\geq \Paren{1 - \delta - 2\sqrt{1-\delta} \cdot O\Paren{\sqrt{\frac{|S'|}{\gamma_{S'}(X_S) \cdot s}}}} \gamma_{S'}(X_S)\\
&\geq \Paren{1 - \delta - C \sqrt{\frac{|S'|}{\gamma_{S'}(X_S) \cdot s}}} \gamma_{S'}(X_S)
\end{align*}
for some large enough constant $C$.
Because this holds for all $z \in \mathbb{C}(S')$, we obtain the desired lower bound on $\gamma_{S'}(X)$.
\end{proof}

\section{Proof that partially-rotated matrices satisfy RNO}
\label{sec:rot-sep-rno}

\restatelemma{thm:rot-sep-rno}
\begin{proof}
Consider some fixed sets $S_\alpha \subseteq S$ with $|S_\alpha| = s$ and $S_\beta \subseteq S$ with $|S_\beta| = s$.
We first prove the result for $\alpha$ and $\beta$ satisfying $\supp(\alpha) \subseteq S_\alpha$ and $\supp(\beta) \subseteq S_\beta$, and then union bound over all $\leq d^{2s}$ choices of $S_\alpha$ and $S_\beta$.
Let $\bm{\Phi}_{\alpha} \in \R^{n \times s}, \bm{\Phi}_{\beta} \in \R^{n \times s}$ be orthonormal bases of the span of the columns of $\bm{X}_S$ indexed by $S_\alpha$ and the span of the columns of $\bm{X}_{S^c}$ indexed by $S_\beta$, respectively.
Then 
\begin{align*}
\Abs{\Iprod{ \frac{\bm{X}_S \alpha}{\Norm{\bm{X}_S \alpha}_2}, \frac{\bm{X}_{S^c} \beta}{\Norm{\bm{X}_{S^c} \beta}_2} }} = \Abs{\Iprod{ \bm{\Phi}_{\alpha} v_\alpha, \bm{\Phi}_{\beta} v_\beta}}\,,
\end{align*}
where $v_\alpha \in \R^s$ and $v_\beta \in \R^s$ are such that $\bm{\Phi}_\alpha v_\alpha = \frac{\bm{X}_S \alpha}{\Norm{\bm{X}_S \alpha}_2}$ and $\bm{\Phi}_\beta v_\beta = \frac{\bm{X}_{S^c} \beta}{\Norm{\bm{X}_{S^c} \beta}_2}$.
By $(\epsilon, \delta)$-partial rotation, we have for each $v_\alpha$ and $v_\beta$ that $\mathbb{P}(\Abs{\Iprod{ \bm{\Phi}_{\alpha} v_\alpha, \bm{\Phi}_{\beta} v_\beta}} > \epsilon) \leq e^{-\delta n}$.

Consider an $\epsilon/100$-cover $\{v_1, \ldots, v_m\}$ of the unit $s$-dimensional ball. Such a cover exists with $m \leq O(1/\epsilon)^{s}$ (see Example 5.8 in~\cite{MR3967104-Wainwright19}).
By a union bound over all $m^2$ pairs of vectors in the net, for $n \geq C \delta^{-1} s \log(1/\epsilon)$ large enough, we also get with probability at least $1-e^{-\Omega(\delta n)}$ that $\Abs{\Iprod{ \bm{\Phi}_{\alpha} v_i, \bm{\Phi}_{\beta} v_j}} \leq \epsilon$ for all $i, j \in [m]$.
Then, for some arbitrary $v_\alpha$ and $v_\beta$, we can select $v'_\alpha$ and $v'_\beta$ in the cover with $\Norm{v'_\alpha - v_\alpha}_2 \leq \epsilon/100$ and $\Norm{v'_\beta - v_\beta}_2 \leq \epsilon/100$, so by the triangle inequality
\begin{align*}
\Abs{\Iprod{ \bm{\Phi}_{\alpha} v_\alpha, \bm{\Phi}_{\beta} v_\beta}}
&\leq \Abs{\Iprod{ \bm{\Phi}_{\alpha} v'_\alpha, \bm{\Phi}_{\beta} v'_\beta}} + \Abs{\Iprod{ \bm{\Phi}_{\alpha} v_\alpha, \bm{\Phi}_{\beta} (v'_\beta - v_\beta)}} + \Abs{\Iprod{ \bm{\Phi}_{\alpha} (v'_\alpha - v_\alpha), \bm{\Phi}_{\beta} v'_\beta}}\\
&\leq \epsilon + \epsilon/100 + \epsilon/100 \leq 2\epsilon\,.
\end{align*}

Finally, by union bounding over all $\leq d^{2s}$ choices of $S_\alpha$ and $S_\beta$, we get the same bound with probability at least $1-e^{-\Omega(\delta n)}$ for all $s$-sparse $\alpha$ and $s$-sparse $\beta$, for $n \geq C \delta^{-1} s \log d$ large enough.
\end{proof}

\section{Proof of main result}
\label{sec:main-results}

Here we finally prove~\Cref{thm:main}.
The proof of \Cref{cor:main} is deferred to \cref{sec:deferredproofs}.

\restatetheorem{thm:main}
\begin{proof}(Proof of~\Cref{thm:main})
Consider some fixed $S' \subseteq S$ such that $n \geq C |S'| \log d / \gamma_{S'}(\bm{X}_S)$ for some large enough constant $C > 0$.
Then we get by~\Cref{thm:rot-sep-rno} that $\bm{X}$ satisfies $(C' |S'| / \gamma_{S'}(\bm{X}_S), S, 0.02)$-RNO for some large enough constant $C' > 0$ with probability at least $1-e^{-c n}$ for some small enough constant $c > 0$.
Then, by~\Cref{thm:rno-to-re}, $\gamma_{S'}(\bm{X}) \geq 0.9 \gamma_{S'}(\bm{X}_S)$.

It remains to union bound over all choices $S' \subseteq S$ with $n \geq C |S'| \log d / \gamma_{S'}(\bm{X}_S)$.
Note that, under this lower bound on $n$, we have $|S'| \leq \frac{n \cdot \gamma_{S'}(\bm{X}_S)}{C \log d} \leq \frac{n}{C \log d}$, where we used that always $\gamma_{S'}(\bm{X}_S) \leq 1$.
It suffices to union bound over only the maximal $S'$.
Then, by union bounding over all $\binom{d}{|S'|} \leq d^{|S'|} \leq d^{n/(C \log d)} = e^{n/C}$ choices, we get probability of success $1 - e^{-cn + n/C}$, which for $C > 1/c$ is $1-e^{-\Omega(n)}$.
\end{proof}

\bibliographystyle{alpha}
\bibliography{references}

\appendix 
\section{Concentration bounds}
\label{sec:conc}
In this section we state some standard concentration bounds that we use in our proofs.

\begin{definition}[Sub-Gaussian random variable]
\label[definition]{def:subgauss}
A random variable $\bm{x} \in \R$ is $\sigma^2$-sub-Gaussian if $\mathbb{E} e^{\lambda (\bm{x} - \mathbb{E} \bm{x})} \leq e^{\lambda^2 \sigma^2 / 2}$ for all $\lambda \in \R$.
\end{definition}

\begin{fact}[See text before Proposition 2.5 in~\cite{MR3967104-Wainwright19}]
\label[fact]{fact:sum-subgauss}
If two random variables $\bm{x}_1, \bm{x}_2 \in \R$ are independent and $\sigma_1^2$-sub-Gaussian and $\sigma_2^2$-sub-Gaussian, respectively, then $\bm{x}_1 + \bm{x}_2$ is $(\sigma_1^2+\sigma_2^2)$-sub-Gaussian.
\end{fact}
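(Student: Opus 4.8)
The plan is to argue directly from the moment generating function (MGF) characterization of sub-Gaussianity that serves as the definition here (\Cref{def:subgauss}). I would fix an arbitrary $\lambda \in \R$ and note that, since centering is linear, $\bm{x}_1 + \bm{x}_2 - \E(\bm{x}_1 + \bm{x}_2) = (\bm{x}_1 - \E \bm{x}_1) + (\bm{x}_2 - \E \bm{x}_2)$; hence
\[
\E e^{\lambda\Paren{\bm{x}_1 + \bm{x}_2 - \E(\bm{x}_1 + \bm{x}_2)}} = \E\Brac{e^{\lambda(\bm{x}_1 - \E \bm{x}_1)} \cdot e^{\lambda(\bm{x}_2 - \E \bm{x}_2)}}\,.
\]
Next I would use that $\bm{x}_1$ and $\bm{x}_2$ are independent, so that the two exponential factors are independent and the expectation of the product factors into the product of the expectations. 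Applying the hypothesis that $\bm{x}_i$ is $\sigma_i^2$-sub-Gaussian to each factor then gives
\[
\E e^{\lambda\Paren{\bm{x}_1 + \bm{x}_2 - \E(\bm{x}_1 + \bm{x}_2)}} = \E e^{\lambda(\bm{x}_1 - \E \bm{x}_1)} \cdot \E e^{\lambda(\bm{x}_2 - \E \bm{x}_2)} \leq e^{\lambda^2 \sigma_1^2/2} \cdot e^{\lambda^2 \sigma_2^2/2} = e^{\lambda^2(\sigma_1^2 + \sigma_2^2)/2}\,.
\]
Since $\lambda$ was arbitrary, this is precisely the assertion that $\bm{x}_1 + \bm{x}_2$ is $(\sigma_1^2 + \sigma_2^2)$-sub-Gaussian, which is what I would conclude.

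I do not expect a genuine obstacle; the argument is essentially a one-line computation. The two points I would be slightly careful to state are that a measurable function of $\bm{x}_1$ is independent of a measurable function of $\bm{x}_2$ (this is what licenses splitting the expectation of the product into a product of expectations), and that the sub-Gaussian hypotheses already force each individual centered MGF to be finite, so there is no integrability subtlety. I note in passing that the same reasoning extends by an immediate induction to any finite number of independent sub-Gaussian summands, with the parameters adding, but the statement above only requires the two-summand case.
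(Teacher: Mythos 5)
Your proof is correct and is exactly the standard argument: factor the MGF of the centered sum via independence and multiply the two sub-Gaussian bounds. The paper itself does not reprove this fact (it simply cites the text before Proposition 2.5 of Wainwright), and your argument is the same one given there, so there is nothing to reconcile.
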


\begin{fact}[See Equation 2.9 in~\cite{MR3967104-Wainwright19}]
\label[fact]{fact:uni-subgauss}
Let $\bm{x} \in \R$ be $\sigma^2$-sub-Gaussian. Then 
\[\mathbb{P}\Paren{\Abs{\bm{g}} \geq t} \leq 2e^{-t^2/(2\sigma^2)}\,.\]
\end{fact}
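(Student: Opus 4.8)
The plan is to prove this by the standard exponential-Markov (Chernoff) argument. First I would reduce to the centered case: since the sub-Gaussian hypothesis is a statement about $\bm{x}-\E\bm{x}$, set $\bm{g}\defeq\bm{x}-\E\bm{x}$ (this also matches the notation in the statement), so that by assumption $\E e^{\lambda\bm{g}}\leq e^{\lambda^2\sigma^2/2}$ for every $\lambda\in\R$. It suffices to bound $\mathbb{P}(\bm{g}\geq t)$ and $\mathbb{P}(\bm{g}\leq -t)$ separately for $t>0$ (the bound is trivial for $t\leq 0$).

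Next I would bound the upper tail. Fix $t>0$ and any $\lambda>0$. Since $z\mapsto e^{\lambda z}$ is nonnegative and increasing, Markov's inequality gives
\[\mathbb{P}(\bm{g}\geq t)=\mathbb{P}\Paren{e^{\lambda\bm{g}}\geq e^{\lambda t}}\leq e^{-\lambda t}\,\E e^{\lambda\bm{g}}\leq e^{-\lambda t+\lambda^2\sigma^2/2}\,.\]
The exponent is a convex quadratic in $\lambda$ minimized at $\lambda=t/\sigma^2>0$, where it equals $-t^2/(2\sigma^2)$; substituting this choice yields $\mathbb{P}(\bm{g}\geq t)\leq e^{-t^2/(2\sigma^2)}$.

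Then I would get the lower tail by symmetry: the defining inequality for $\bm{g}$ is invariant under $\lambda\mapsto-\lambda$, so $-\bm{g}$ is again $\sigma^2$-sub-Gaussian and the same computation gives $\mathbb{P}(\bm{g}\leq -t)=\mathbb{P}(-\bm{g}\geq t)\leq e^{-t^2/(2\sigma^2)}$. A union bound over the two one-sided events then gives $\mathbb{P}(|\bm{g}|\geq t)\leq\mathbb{P}(\bm{g}\geq t)+\mathbb{P}(\bm{g}\leq -t)\leq 2e^{-t^2/(2\sigma^2)}$, as claimed.

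There is no real obstacle here: the only points requiring any care are optimizing over $\lambda$ (using the minimizer of the quadratic rather than an arbitrary $\lambda$ — this is precisely what upgrades the crude $e^{-\lambda t}$ decay to Gaussian-type decay) and invoking the $\lambda\mapsto-\lambda$ symmetry so that the lower tail needs no separate hypothesis.
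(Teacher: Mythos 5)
Your proof is correct and is the standard Chernoff/exponential-Markov argument; the paper itself offers no proof of this fact (it is cited verbatim from Wainwright's textbook, Equation 2.9), and that textbook derives it in exactly the same way (optimize the exponential moment bound over $\lambda$, then union-bound the two tails). Nothing to add.
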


\begin{fact}[See Example 2.11 in~\cite{MR3967104-Wainwright19}]
\label[fact]{fact:gauss-norm}
Let $\bm{g} \sim N(0, I_n)$. Then for $t \in (0, 1)$
\[\mathbb{P}\Paren{\Abs{\frac{1}{n} \Norm{\bm{g}}_2^2 - 1} \geq t} \leq 2\exp\Paren{-\frac{t^2 n}{8}}\,.\]
\end{fact}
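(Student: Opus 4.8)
The plan is to prove this standard $\chi^2$-concentration bound by the Chernoff (exponential moment) method, treating the upper and lower deviations separately.

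First I would write $\Norm{\bm{g}}_2^2 = \sum_{i=1}^n \bm{g}_i^2$ with the $\bm{g}_i$ i.i.d.\ $N(0,1)$, so that the quantity of interest is an average of i.i.d.\ centered variables $\bm{g}_i^2 - 1$. The one computation that matters is the moment generating function of a single centered $\chi^2_1$ variable: for $\lambda < 1/2$ one has $\E e^{\lambda \bm{g}_i^2} = (1-2\lambda)^{-1/2}$, hence $\E e^{\lambda(\bm{g}_i^2-1)} = e^{-\lambda}(1-2\lambda)^{-1/2}$. Taking logarithms and expanding $-\tfrac12\log(1-2\lambda) = \lambda + \lambda^2 + \tfrac43\lambda^3 + \cdots$ gives $\log \E e^{\lambda(\bm{g}_i^2-1)} = \lambda^2 + \tfrac43\lambda^3 + \cdots$; bounding the tail of this series geometrically shows this is at most $2\lambda^2$ for $\lambda \in (0, 1/4]$, and, applying the same expansion with $\lambda \mapsto -\lambda$ (where the series now alternates favorably), $\log \E e^{-\lambda(\bm{g}_i^2-1)} \le \lambda^2$ for all $\lambda \ge 0$.

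Next I would apply Markov's inequality to the exponential. For the upper tail and any $\lambda \in (0,1/4]$, independence gives $\mathbb{P}(\tfrac1n\Norm{\bm{g}}_2^2 - 1 \ge t) \le e^{-\lambda t n}(\E e^{\lambda(\bm{g}_1^2-1)})^n \le \exp(-\lambda t n + 2n\lambda^2)$; optimizing with $\lambda = t/4$ — which lies in $(0,1/4]$ precisely because $t \in (0,1)$ — yields $\exp(-t^2 n/8)$. For the lower tail and any $\lambda > 0$, the same argument with $\log\E e^{-\lambda(\bm{g}_1^2-1)} \le \lambda^2$ gives $\mathbb{P}(\tfrac1n\Norm{\bm{g}}_2^2 - 1 \le -t) \le \exp(-\lambda t n + n\lambda^2)$, and $\lambda = t/2$ yields $\exp(-t^2 n/4) \le \exp(-t^2 n/8)$. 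A union bound over the two one-sided events gives the claimed $2\exp(-t^2 n/8)$.

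The only delicate part is constant-chasing: verifying rigorously that $\log\E e^{\lambda(\bm{g}_1^2-1)} \le 2\lambda^2$ on $\lambda \in (0,1/4]$ (via the geometric-series tail bound on the Taylor expansion, or by a direct calculus argument showing $2\lambda^2 + \lambda + \tfrac12\log(1-2\lambda) \ge 0$ there), and checking that the minimizing values of $\lambda$ stay in the admissible range under the hypothesis $t \in (0,1)$. The asymmetry between the two tails — the lower tail admits the cleaner bound valid for all $\lambda > 0$, while the upper tail needs $\lambda$ bounded away from $1/2$ — is exactly what forces the worst-case exponent $t^2 n/8$ rather than $t^2 n/4$. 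None of this is a genuine obstacle; it is the textbook $\chi^2$ tail bound, and the work is entirely bookkeeping.
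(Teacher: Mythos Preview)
Your argument is correct. The paper does not supply its own proof of this fact; it simply cites Example~2.11 in Wainwright's book, and your Chernoff-bound computation is essentially the same argument that appears there (showing $\bm{g}_i^2-1$ is sub-exponential and optimizing the exponential Markov bound).
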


\begin{fact}[Hanson-Wright, see Theorem 1.1 in~\cite{MR3125258-Rudelson13}]
\label{fact:hanson-wright}
Let $X \in \R^n$ have independent $\sigma^2$-sub-Gaussian entries with mean $0$.
Then there exists some absolute constant $c > 0$ such that, for all $t \geq  0$,
\[\mathbb{P}\Paren{|\bm{X}^\top A \bm{X} - \mathbb{E} \bm{X}^\top A \bm{X}| > t} \leq 2\exp\Paren{- c \cdot \min\Paren{\frac{t^2}{\sigma^4 \Norm{A}_F^2}, \frac{t}{\sigma^2 \Norm{A}}}}\,.\]
\end{fact}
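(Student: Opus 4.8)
The plan is to follow the standard route of \cite{MR3125258-Rudelson13}. First I would reduce to the case $\sigma = 1$: replacing $\bm{X}$ by $\bm{X}/\sigma$ and $A$ by $\sigma^2 A$ leaves $\bm{X}^\top A \bm{X}$ unchanged and rescales the tail correctly, so it suffices to prove the bound with $\Norm{A}_F$ and $\Norm{A}$ in place of $\sigma^2\Norm{A}_F$ and $\sigma^2\Norm{A}$. Next I would split the centered quadratic form into its diagonal and off-diagonal parts,
\[
\bm{X}^\top A \bm{X} - \E \bm{X}^\top A \bm{X} \;=\; \underbrace{\sum_{i} A_{ii}\Paren{\bm{X}_i^2 - \E \bm{X}_i^2}}_{=:\,D} \;+\; \underbrace{\sum_{i \neq j} A_{ij}\bm{X}_i \bm{X}_j}_{=:\,O}\mcom
\]
and bound $\mathbb{P}(\Abs{D} > t/2)$ and $\mathbb{P}(\Abs{O} > t/2)$ separately, combining at the end by a union bound. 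For the diagonal term $D$: since each $\bm{X}_i$ is $1$-sub-Gaussian, each $\bm{X}_i^2 - \E\bm{X}_i^2$ is a centered sub-exponential variable with $\psi_1$-norm bounded by an absolute constant, and they are independent; Bernstein's inequality then gives $\mathbb{P}(\Abs{D} > t/2) \le 2\exp\Paren{-c\min\Paren{t^2/\sum_i A_{ii}^2,\; t/\max_i\Abs{A_{ii}}}}$, and one finishes using $\sum_i A_{ii}^2 \le \Norm{A}_F^2$ and $\max_i \Abs{A_{ii}} = \max_i \Abs{e_i^\top A e_i} \le \Norm{A}$.

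The off-diagonal term $O$ is the heart of the argument; here I would bound the moment generating function $\E e^{\lambda O}$ for $\Abs{\lambda} \lesssim 1/\Norm{A}$ and then apply a Chernoff bound. The steps are: (i) \emph{decoupling} — by the de la Pe\~na--Montgomery-Smith decoupling inequality, $\E e^{\lambda O} \le \E e^{4\lambda \Iprod{\bm{X}, A\bm{X}'}}$, where $\bm{X}'$ is an independent copy of $\bm{X}$ and the right-hand sum now ranges over all pairs $(i,j)$; (ii) \emph{conditioning on $\bm{X}'$} — the linear form $\Iprod{\bm{X}, A\bm{X}'}$ in the independent sub-Gaussian coordinates of $\bm{X}$ has sub-Gaussian parameter $\lesssim \Norm{A\bm{X}'}_2^2$, so $\E_{\bm{X}} e^{4\lambda\Iprod{\bm{X}, A\bm{X}'}} \le \exp\Paren{C\lambda^2 \Norm{A\bm{X}'}_2^2}$; (iii) \emph{linearizing the quadratic form} — to take the expectation over $\bm{X}'$, insert an auxiliary standard Gaussian vector $\bm{g} \perp \bm{X}'$, use $\E_{\bm{g}} e^{\sqrt{2\mu}\Iprod{\bm{g}, A\bm{X}'}} = e^{\mu\Norm{A\bm{X}'}_2^2}$ with $\mu = C\lambda^2$, swap the order of expectation, condition on $\bm{g}$ (so $\Iprod{A^\top \bm{g}, \bm{X}'}$ is again a sub-Gaussian linear form in $\bm{X}'$, giving a factor $\exp(C'\mu\,\bm{g}^\top A A^\top \bm{g})$), and finally evaluate the resulting genuine Gaussian quadratic form $\E_{\bm{g}} e^{C'\mu\, \bm{g}^\top A A^\top \bm{g}} = \det(I - 2C'\mu\, A A^\top)^{-1/2} = \prod_i (1 - 2C'\mu\, s_i^2)^{-1/2}$, where the $s_i$ are the singular values of $A$. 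For $\mu \Norm{A}^2$ below a small absolute constant, $(1-x)^{-1/2} \le e^{x}$ pointwise makes this $\le \exp\Paren{2C'\mu\sum_i s_i^2} = \exp\Paren{2C'\mu\Norm{A}_F^2}$. Altogether $\E e^{\lambda O} \le \exp\Paren{C''\lambda^2 \Norm{A}_F^2}$ for $\Abs{\lambda} \le c/\Norm{A}$; optimizing $\mathbb{P}(O > t/2) \le \exp\Paren{-\lambda t/2 + C''\lambda^2 \Norm{A}_F^2}$ over this range — the interior optimum when $t \lesssim \Norm{A}_F^2/\Norm{A}$ and the boundary $\lambda = c/\Norm{A}$ otherwise — yields $\exp\Paren{-c'\min\Paren{t^2/\Norm{A}_F^2,\; t/\Norm{A}}}$, and symmetrically for $-O$. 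Union-bounding the two estimates, absorbing constants (shrinking $c$ to turn the resulting $4$ into $2$), and undoing the $\sigma = 1$ reduction gives the claim.

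The main obstacle is step (iii): after conditioning on the independent copy $\bm{X}'$ one is left with $\E_{\bm{X}'}\exp\Paren{C\lambda^2\Norm{A\bm{X}'}_2^2}$, which is the exponential of a quadratic form in a sub-Gaussian vector — so naively one would need Hanson--Wright again, a circularity. The resolution is precisely the Gaussian-linearization trick above: inserting $\bm{g}$ turns the troublesome factor into a true Gaussian quadratic form, whose MGF factorizes over the eigenvalues of $AA^\top$ and is controlled entirely by the two scalars $\sum_i s_i^2 = \Norm{A}_F^2$ and $\max_i s_i^2 = \Norm{A}^2$ — exactly the two quantities in the tail bound. The decoupling inequality in step (i) is the other nontrivial input, which I would invoke as a black box.
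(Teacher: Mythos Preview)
The paper does not prove this statement at all: it is recorded as a \emph{Fact} in the appendix on concentration bounds and simply cited from Rudelson--Vershynin, to be used as a black box in the proof of \Cref{lem:subgauss-to-partial}. Your sketch is a faithful and correct outline of the proof in that reference (diagonal/off-diagonal split, Bernstein for the diagonal, decoupling plus Gaussian linearization for the off-diagonal MGF), so there is nothing to compare --- you have supplied strictly more than the paper does.
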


\section{Deferred proofs}
\label{sec:deferredproofs}

\restatelemma{lem:rot-to-partial}
\begin{proof}
We have 
\begin{align*}
\Iprod{\frac{\bm{X}'_S \alpha}{\Norm{\bm{X}'_S \alpha}_2}, \frac{\bm{X}'_{S^c} \beta}{\Norm{\bm{X}'_{S^c} \beta}_2}}
&= \Iprod{\frac{X_S \alpha}{\Norm{X_S \alpha}_2}, \frac{\bm{R} X_{S^c} \beta}{\Norm{\bm{R} X_{S^c} \beta}_2}}
= \Iprod{\frac{X_S \alpha}{\Norm{X_S \alpha}_2}, \bm{R} \frac{X_{S^c} \beta}{\Norm{X_{S^c} \beta}_2}}\,,
\end{align*}
where in the last equality we used that $\ell_2$-norms are rotation-invariant.
So, equivalently, it suffices to bound $\mathbb{P} \Paren{\Abs{\langle v, \bm{R} w\rangle} > \epsilon}$ for arbitrary fixed unit vectors $v, w \in \R^n$.

We have that $\bm{R} w$ is distributed uniformly over the unit sphere.
Therefore $\bm{R} w$ is distributed as $\bm{g}/\norm{\bm{g}}$, where $\bm{g} \sim N(0, I_n)$.
Then $\langle v, \bm{R} w\rangle$ is distributed as $\langle v, \bm{g}\rangle / \norm{\bm{g}}$.
Because $\langle v, \bm{g}\rangle \sim N(0, 1)$, we have by~\Cref{fact:uni-subgauss} that $\mathbb{P}(|\langle v, \bm{g}\rangle| \geq \epsilon \sqrt{n} / 2) \leq 2 e^{-\epsilon^2 n/8}$.
In addition, we have by~\Cref{fact:gauss-norm} that $\mathbb{P}(\norm{\bm{g}}_2 < \sqrt{n} / 2) \leq 2e^{-n/32}$.
Therefore, 
\[\mathbb{P} \Paren{\Abs{\langle v, \bm{R} w\rangle} > \epsilon} \leq 2 e^{-\epsilon^2 n/8} + 2 e^{-n/32} \leq e^{-\delta n}\]
for $\delta = \Omega(\epsilon^2)$ small enough, and then $\bm{X}'$ is $(\epsilon, \delta)$-partially rotated.
Choosing $\epsilon=0.01$ gives the stated result.
\end{proof}

\restatelemma{lem:subgauss-to-partial}
\begin{proof}
We have 
\begin{align*}
\Iprod{\frac{\bm{X}'_S \alpha}{\Norm{\bm{X}'_S \alpha}_2}, \frac{\bm{X}'_{S^c} \beta}{\Norm{\bm{X}'_{S^c} \beta}_2}}
&= \Iprod{\frac{X_S \alpha}{\Norm{X_S \alpha}_2}, \frac{\bm{R} X_{S^c} \beta}{\Norm{\bm{R} X_{S^c} \beta}_2}}\,,
\end{align*}
so, equivalently, it suffices to bound $\mathbb{P} \Paren{\Abs{\langle v, \frac{\bm{R} w}{\Norm{\bm{R} w}_2}\rangle} > \epsilon}$ for arbitrary fixed unit vectors $v, w \in \R^n$.

First, we show that $\Abs{\langle v, \bm{R} w\rangle}$ is small.
We have that $\langle v, \bm{R} w \rangle = \langle \bm{R}, v \otimes w\rangle$, which by~\cref{fact:sum-subgauss} is sub-Gaussian with mean $0$ and variance proxy $(\sigma')^2 \sum_{i=1}^n \sum_{j=1}^n w_i^2 v_j^2 = (\sigma')^2$. 
Then, by \cref{fact:uni-subgauss}, we have that $\mathbb{P}(\Abs{\langle v, \bm{R} w\rangle} \geq \epsilon \sigma \sqrt{n} / 2) \leq 2 e^{-\Omega(\epsilon^2 n \sigma^2 / (\sigma')^2)}$.

Second, we show that $\Norm{\bm{R}w}_2$ is large.
We start by observing that $\mathbb{E} \Norm{\bm{R}w}_2^2 = w^\top \Paren{\mathbb{E} \bm{R}^\top \bm{R}} w = \sigma^2 n$.
To get a bound on the deviation from the mean, we start by rewriting $\Norm{\bm{R}w}_2^2 = \vectorize(\bm{R})^\top (A) \vectorize(\bm{R})$, where $\vectorize(\bm{R}) \in \R^{n^2}$ has entries $\vectorize(\bm{R})_{(i,j)} = \bm{R}_{i,j}$, and where $A \in \R^{n^2 \times n^2}$ has entries
\[A_{(i_1,j_1),(i_2,j_2)} = \begin{cases}
w_{j_1} w_{j_2} & \text{ if } i_1 = i_2\,,\\
0 & \text{ otherwise}\,.
\end{cases}\]
We remark that, up to a permutation of the rows and columns, $A$ is a block diagonal matrix with $n$ blocks of size $n \times n$, each block equal to $ww^\top$.
From this, a simple calculation shows that $\Norm{A} = 1$ and $\Norm{A}_F^2 = n$.
Then, by the Hanson-Wright inequality in~\cref{fact:hanson-wright},
\[\mathbb{P}\Paren{\Abs{ \Norm{\bm{R}w}_2^2 - \sigma^2 n  } > t} \leq 2 \exp\Paren{-\Omega\Paren{\min\Paren{\frac{t^2}{ (\sigma')^4 n}, \frac{t}{(\sigma')^2}}}}\,.\]
We note that, if a random variable is $(\sigma')^2$-sub-Gaussian, then its variance is bounded by $(\sigma')^2$, so $\sigma^2 \leq (\sigma')^2$.
Then, we get that $\mathbb{P}(\Norm{\bm{R}w}_2^2 < \sigma^2 n / 4) \leq 2e^{-\Omega(n \sigma^4/(\sigma')^4)}$, so\linebreak $\mathbb{P}(\Norm{\bm{R}w}_2 < \sigma \sqrt{n} / 2) \leq 2e^{-\Omega(n \sigma^4/(\sigma')^4)}$.

Therefore, putting the bounds on the numerator and the denominator together,
\[\mathbb{P} \Paren{\Abs{\Iprod{ v, \frac{\bm{R} w}{\Norm{\bm{R} w}_2}} } > \epsilon} \leq 2 e^{-\Omega(\epsilon^2 n \sigma^2 / (\sigma')^2)} + 2e^{-\Omega(n \sigma^4/(\sigma')^4)} \leq e^{-\delta n}\]
for $\delta = \Omega(\epsilon^2 \sigma^4/(\sigma')^4)$ small enough, and then $\bm{X}'$ is $(\epsilon, \delta)$-partially rotated.
Choosing $\epsilon=0.01$ gives the stated result.
\end{proof}

\restatecorollary{cor:main}
\begin{proof}%
If $n \geq C k \log d / \gamma_{\supp(\beta)}(\bm{X}_S)$ for the absolute constant $C > 0$ required by~\Cref{thm:main}, then by applying the theorem with $S'=\supp(\beta)$, we get that $\gamma_{\supp(\beta)}(\bm{X}) \geq 0.9 \gamma_{\supp(\beta)}({\bm{X}}_S)$ with high probability.
Then the desired bound follows by a straightforward adjustment of the proof of Theorem 7.20b in~\cite{MR3967104-Wainwright19} to the constrained Lasso with the definition of the RE constant in~\Cref{def:re}.
Specifically, one would start from Equation 7.28 in~\cite{MR3967104-Wainwright19}, apply \Holder's inequality to the right-hand side, bound $\norm{\hat{\Delta}}_1 \leq 2\sqrt{k} \norm{\hat{\Delta}_{\supp(\beta)}}_2 \leq 2 \sqrt{\frac{k}{\gamma_{\supp(\beta)}(\bm{X}) \cdot n}} \Norm{\bm{X} \hat\Delta}_2$, and finally divide by $\Norm{\bm{X} \hat\Delta}_2$.

Else, if $n < C k \log d / \gamma_{\supp(\beta)}(\bm{X}_S)$, then the desired bound is trivial.
Specifically, the Lasso estimate always satisfies the trivial prediction error bound $O(\norm{\bm{w}}_2^2/n)$, which is with high probability bounded by $O(\sigma^2)$; e.g., this is immediate by applying Cauchy-Schwarz in Equation 7.28 in~\cite{MR3967104-Wainwright19}.
When $n < C k \log d / \gamma_{\supp(\beta)}(\bm{X}_S)$, we simply observe that this bound of $O(\sigma^2)$ already satisfies the conclusion of our corollary.
\end{proof}

\section{RIP implies RNO}
\label{sec:rip-to-rno}

In this section we give a simple proof that RIP implies RNO. Let us first define RIP:

\begin{definition}[Restricted isometry property]
Let $s \in \mathbb{N}$ and $\delta \geq 0$. 
A matrix $X \in \R^{n \times d}$ satisfies the $(s, \delta)$-\emph{restricted isometry property} (RIP) condition if, for all $s$-sparse $\beta \in \R^{d}$,
\[(1-\delta) \norm{\beta}_2 \leq \norm{X \beta}_2 \leq (1+\delta) \norm{\beta}_2\,.\]
\end{definition}

Then we obtain the following implication:

\begin{proposition}[RIP implies RNO]
\label[proposition]{prop:rip-to-rno}
Suppose a matrix $X \in \R^{n \times d}$ satisfies $(2s, \delta)$-RIP.
Then, for any $S \subseteq [d]$, $X$ satisfies $(s, S, \frac{4\delta}{(1-\delta)^2})$-RNO.
\end{proposition}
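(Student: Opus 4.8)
The plan is to use the standard parallelogram-style identity that relates an inner product of two vectors to the norms of their sum and difference, and then apply RIP to each of those. Fix $S \subseteq [d]$, let $\alpha \in \R^{|S|}$ be $s$-sparse and $\beta \in \R^{|S^c|}$ be $s$-sparse, and assume $X_S\alpha \neq 0$ and $X_{S^c}\beta \neq 0$ (else the inner product is $0$ by convention and there is nothing to prove). Write $u = X_S\alpha$ and $v = X_{S^c}\beta$; these are images of $2s$-sparse vectors with \emph{disjoint} supports, so $u + v$ and $u - v$ are both images under $X$ of $2s$-sparse vectors, and by $(2s,\delta)$-RIP we have $(1-\delta)^2 (\norm{\alpha}_2^2 + \norm{\beta}_2^2) \le \norm{u \pm v}_2^2 \le (1+\delta)^2 (\norm{\alpha}_2^2 + \norm{\beta}_2^2)$, using that $\norm{u\pm v}$ corresponds to a vector of squared $\ell_2$-norm $\norm{\alpha}_2^2 + \norm{\beta}_2^2$.

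The key step is then the identity $\langle u, v\rangle = \tfrac14\big(\norm{u+v}_2^2 - \norm{u-v}_2^2\big)$. From the RIP bounds above,
\[
\abs{\langle u, v\rangle} \;=\; \frac14 \Abs{\norm{u+v}_2^2 - \norm{u-v}_2^2} \;\le\; \frac14\Paren{(1+\delta)^2 - (1-\delta)^2}\Paren{\norm{\alpha}_2^2 + \norm{\beta}_2^2} \;=\; \delta\Paren{\norm{\alpha}_2^2 + \norm{\beta}_2^2}\,.
\]
On the other hand, applying $(2s,\delta)$-RIP (in fact $(s,\delta)$-RIP suffices here) to $\alpha$ and to $\beta$ separately gives $\norm{X_S\alpha}_2 \ge (1-\delta)\norm{\alpha}_2$ and $\norm{X_{S^c}\beta}_2 \ge (1-\delta)\norm{\beta}_2$, hence $\norm{X_S\alpha}_2 \norm{X_{S^c}\beta}_2 \ge (1-\delta)^2 \norm{\alpha}_2 \norm{\beta}_2$.

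Combining, and using $\norm{\alpha}_2^2 + \norm{\beta}_2^2 \le 2\norm{\alpha}_2\norm{\beta}_2$ would cost a factor of $2$, but in fact we can be sharper: since the inner product $\langle u,v\rangle$ is invariant under rescaling $\alpha \mapsto t\alpha$, $\beta \mapsto \beta/t$, we may assume $\norm{\alpha}_2 = \norm{\beta}_2$, in which case $\norm{\alpha}_2^2 + \norm{\beta}_2^2 = 2\norm{\alpha}_2\norm{\beta}_2$ with no loss — wait, that still gives the factor $2$. To land the exact constant $\frac{4\delta}{(1-\delta)^2}$ claimed, one instead normalizes $\norm{\alpha}_2 = \norm{\beta}_2 = 1$ from the start (by homogeneity of the RNO inner product), so $\norm{\alpha}_2^2 + \norm{\beta}_2^2 = 2$, giving $\abs{\langle u,v\rangle} \le 2\delta$, while $\norm{u}_2\norm{v}_2 \ge (1-\delta)^2$; dividing yields
\[
\Abs{\Iprod{\frac{X_S\alpha}{\norm{X_S\alpha}_2}, \frac{X_{S^c}\beta}{\norm{X_{S^c}\beta}_2}}} \;\le\; \frac{2\delta}{(1-\delta)^2}\,,
\]
which is even a factor $2$ better than stated, so the claimed bound follows a fortiori. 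I expect no real obstacle here; the only thing to be careful about is that $u$ and $v$ are images of sparse vectors on \emph{disjoint} coordinate sets, so their sum is genuinely $2s$-sparse (this is why the hypothesis is $(2s,\delta)$-RIP and not $(s,\delta)$-RIP), and that the normalization reduction is legitimate because both sides of the RNO inequality are invariant under independent positive rescalings of $\alpha$ and $\beta$.
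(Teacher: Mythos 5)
Your proof is correct, and it takes a genuinely different (and sharper) route than the paper. The paper expands $\Norm{X_S\alpha + X_{S^c}\beta}_2^2 = \Norm{X_S\alpha}_2^2 + \Norm{X_{S^c}\beta}_2^2 + 2\langle X_S\alpha, X_{S^c}\beta\rangle$, solves for the inner product, and applies $(2s,\delta)$-RIP to the three norm terms separately; after normalizing $\norm{\alpha}_2=\norm{\beta}_2=1$ this gives $\abs{\langle u,v\rangle}\le 4\delta$ and hence the bound $\tfrac{4\delta}{(1-\delta)^2}$. You instead use the symmetric polarization identity $\langle u,v\rangle = \tfrac14\bigl(\norm{u+v}_2^2 - \norm{u-v}_2^2\bigr)$ and apply RIP to $u+v$ and $u-v$ (both images of $2s$-sparse vectors, since $\alpha$ and $\beta$ occupy disjoint coordinate blocks). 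This is essentially the original Cand\`es--Tao derivation of restricted orthogonality from RIP, and it buys you a factor of $2$: you get $\abs{\langle u,v\rangle}\le 2\delta$ and hence $\tfrac{2\delta}{(1-\delta)^2}$, which is strictly better than the stated $\tfrac{4\delta}{(1-\delta)^2}$. The normalization $\norm{\alpha}_2=\norm{\beta}_2=1$ is legitimate since the RNO inner product is invariant under independent positive rescalings of $\alpha$ and $\beta$, exactly as you note. One small wording slip: you say $u=X_S\alpha$ and $v=X_{S^c}\beta$ ``are images of $2s$-sparse vectors''; individually they are images of $s$-sparse vectors, and it is $u\pm v$ that are images of $2s$-sparse vectors --- your conclusion is right, just the sentence is off.
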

\begin{proof}
By the definition of RNO, we can ignore the cases $\alpha = 0$ and $\beta = 0$.
Because the inner product in the definition of RNO is invariant to scalings of $\alpha$ and $\beta$, we can also assume that both $\alpha$ and $\beta$ are unit vectors.
Consider then some unit $s$-sparse $\alpha \in \R^{|S|}$ and unit $s$-sparse $\beta \in \R^{|S^c|}$. Then 
\[\Norm{X_S \alpha + X_{S^c} \beta}^2 = \Norm{X_S \alpha}_2^2 + \Norm{X_{S^c} \beta}_2^2 + 2 \Norm{X_S \alpha}_2 \Norm{X_{S^c}\beta}_2 \Iprod{\frac{X_S \alpha}{\Norm{X_S \alpha}_2}, \frac{X_{S^c} \beta}{\Norm{X_{S^c} \beta}_2}}\,,\]
so
\[\Iprod{\frac{X_S \alpha}{\Norm{X_S \alpha}_2}, \frac{X_{S^c} \beta}{\Norm{X_{S^c} \beta}_2}} = \frac{\Norm{X_S \alpha + X_{S^c} \beta}_2^2 - \Norm{X_S \alpha}_2^2 - \Norm{X_{S^c} \beta}_2^2}{2 \Norm{X_S \alpha}_2 \Norm{X_{S^c}\beta}_2 }\,.\]

By $(2s, \delta)$-RIP, we have the following three inequalities:
\[(1-\delta)^2 \Paren{\Norm{\alpha}_2^2 + \Norm{\beta}_2^2} \leq \Norm{X_S \alpha + X_{S^c} \beta}_2^2 \leq (1+\delta)^2 \Paren{\Norm{\alpha}_2^2 + \Norm{\beta}_2^2}\,,\]
\[(1-\delta)^2 \Norm{\alpha}_2^2 \leq \Norm{X_S \alpha}_2^2 \leq (1+\delta)^2 \Norm{\alpha}_2^2\,,\]
\[(1-\delta)^2 \Norm{\beta}_2^2 \leq \Norm{X_{S^c} \beta}_2^2 \leq (1+\delta)^2 \Norm{\beta}_2^2\,.\]
Therefore, 
\[
\Abs{\Iprod{\frac{X_S \alpha}{\Norm{X_S \alpha}_2}, \frac{X_{S^c} \beta}{\Norm{X_{S^c} \beta}_2}}}
\leq \frac{4\delta \norm{\alpha}_2^2 + 4\delta \norm{\beta}_2^2}{2(1-\delta)^2 \norm{\alpha}_2 \norm{\beta}_2}
\leq \frac{4\delta}{(1-\delta)^2}\,.
\]
\end{proof}

\section{Counterexample for alternative definition of RE constant}
\label{sec:strong-re}

In this section we give a counterexample to~\Cref{thm:main} under the alternative definition of the RE constant in~\Cref{eq:alt-re}.
Specifically, we construct a family of matrices $\bm{X} \in \R^{n \times d}$ that are partially-rotated with respect to a subset $S \subseteq [d]$, but for which even for arbitrarily large $n$ it holds that $\gamma'_{S}(\bm{X}) \lesssim \gamma'_{S}(\bm{X}_S) / k$.

\begin{proposition}
\label[proposition]{prop:strong-re}
Let $n, d \geq k + 2$.
Then there exists a matrix $\bm{X} \in \R^{n \times d}$ that is partially-rotated with respect to a set $S \subseteq [d]$ and for which $\gamma'_{S}(\bm{X}_S)=1$ and with high probability $\gamma'_{S}(\bm{X}) \leq O(1/k)$.
\end{proposition}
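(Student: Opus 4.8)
The plan is to give an explicit construction and verify the three required properties by direct computation. Take $|S| = k$, say $S = \{1,\dots,k\}$ and $S^c = \{k+1,\dots,d\}$, and let $\bm{X} \in \R^{n\times d}$ have columns $\bm{X}_i = \sqrt{n}\,e_i$ for $i \le k$ and $\bm{X}_i = \sqrt{n}\,e_{k+1}$ for every $i > k$ (all columns indexed by $S^c$ are the \emph{same} vector; this uses $n \ge k+2 > k+1$, and the construction needs $d \ge k+2$). Every column has norm $\sqrt{n}$. Since $\bm{X}_S$ consists of orthonormal vectors scaled by $\sqrt{n}$, we have $\frac{1}{n}\bm{X}_S^\top \bm{X}_S = I_k$, and since $\mathbb{C}(S) = \R^k$ when the index set of $\bm{X}_S$ is taken to be $S$ itself, this gives $\gamma'_S(\bm{X}_S) = \lambda_{\min}(I_k) = 1$.

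I would next observe that $\bm{X}$ is partially rotated with respect to $S$ --- in fact deterministically and trivially so. For any $\alpha$ and $\beta$, the vector $\bm{X}_S \alpha$ lies in $\mathrm{span}(e_1,\dots,e_k)$ while $\bm{X}_{S^c}\beta$ is a scalar multiple of $e_{k+1}$, so whenever both are nonzero their inner product is $0 \le \epsilon$; hence $\bm{X}$ is $(\epsilon,\delta)$-partially rotated for every $\epsilon \ge 0$ and $\delta > 0$, and in particular the "with high probability" clause is vacuously satisfied. (If a genuinely semirandom witness is preferred, apply a random rotation $\bm{R}$ to the columns indexed by $S^c$ and invoke \cref{lem:rot-to-partial}; nothing below changes, since the $S^c$-columns remain mutually equal after rotation.)

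Finally I would produce a test vector certifying $\gamma'_S(\bm{X}) \le O(1/k)$. The mechanism is that membership in $\mathbb{C}(S)$ only bounds $\|\beta_{S^c}\|_1$, and a pair of identical, nearly-cancelling columns converts this $\ell_1$-budget into a large $\ell_2$-norm contributing essentially nothing to $\|\bm{X}\beta\|_2$. Explicitly, set $\beta_i = 1/\sqrt{k}$ for $i \le k$, $\beta_{k+1} = \sqrt{k}/2$, $\beta_{k+2} = -\sqrt{k}/2$, and $\beta_i = 0$ otherwise: then $\|\beta_{S^c}\|_1 = \sqrt{k} = \|\beta_S\|_1$, so $\beta \in \mathbb{C}(S)$; the $(k+1)$- and $(k+2)$-terms cancel, so $\bm{X}\beta = \sqrt{n/k}\sum_{i=1}^k e_i$ and $\frac{1}{n}\|\bm{X}\beta\|_2^2 = 1$; and $\|\beta\|_2^2 = 1 + k/2$. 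Therefore $\gamma'_S(\bm{X}) \le \frac{1}{1+k/2} \le \frac{2}{k}$, as desired. I do not anticipate a genuine obstacle here; the only subtleties are that the cancellation inside $S^c$ must survive any randomization used (it does, because the relevant columns are literally equal and rotations preserve equality) and that $\gamma'_S(\bm{X}_S) = 1$ relies on $\bm{X}_S$ being exactly orthogonal so that all of the ill-conditioning is confined to $S^c$.
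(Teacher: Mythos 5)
Your proof is correct and takes essentially the same route as the paper's: both constructions force a pair of identical columns in $S^c$ so that an $\ell_1$-feasible test vector with large but cancelling mass on those two coordinates inflates $\|\beta\|_2^2$ by a factor of $\Theta(k)$ while leaving $\|\bm{X}\beta\|_2$ untouched, and both verify $\gamma'_S(\bm{X}_S)=1$ via orthonormality of $\bm{X}_S$. The only cosmetic differences are that the paper applies a uniformly random orthogonal rotation to the $S^c$-columns to realize the partial-rotation condition (whereas you observe that your particular $\bm{X}$ satisfies it deterministically, with the random rotation as a fallback) and that you normalize the test vector differently, but the ratio is the same.
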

\begin{proof}
Consider the matrix $A \in \R^{n \times (k+2)}$ with the following columns:
\begin{itemize}
    \item For $i=1,\ldots,k$, let $A_i = \sqrt{n} e_i$, where $e_i$ is the vector with $1$ on coordinate $i$ and $0$ elsewhere,
    \item Let $A_{k+1} = A_{k+2}$ be arbitrary of norm $\sqrt{n}$.
\end{itemize}
Let $\bm{R}^{n \times n}$ be a uniformly random orthogonal matrix, and let $\bm{X} = [A_1, \ldots, A_k, \bm{R} A_{k+1}, \bm{R} A_{k+2}] \in \R^{n \times (k+2)}$.
Denote the set $\{1, \ldots, k\}$ by $[k]$.
Then $\bm{X}$ is partially-rotated with respect to $[k]$.
We also have trivially that $\gamma'_{[k]}(\bm{X}_{[k]}) = 1$.

Consider now the vector $\beta = [1, \ldots, 1, k/2, -k/2] \in \R^{k+2}$.
We have that $\beta \in \mathbb{C}([k])$. 
Furthermore, because $\bm{X}_{k+1}=\bm{X}_{k+2}$, we have that $\bm{X}\beta = \bm{X}_{[k]} \beta_{[k]}$, so $\norm{\bm{X}\beta}^2 = nk$.
Then 
\[\frac{\frac{1}{n}\Norm{\bm{X}\beta}^2}{\Norm{\beta}^2} = \frac{k}{k + k^2/2} \leq O(1/k)\,,\]
so $\gamma'_{[k]}(\bm{X}) \leq O(1/k)$.
Finally, the same conclusion holds if we append additional random columns to $\bm{X}$ until it has $d$ columns (we set $\beta$ to be $0$ on the additional coordinates).
\end{proof}

\end{document}